\newcommand{\R}{{\mathbb R}}
\theoremstyle{plain}
\newtheorem{Theorem}{Theorem}[section]
\newtheorem{lemma}[Theorem]{Lemma}
\newtheorem{Proposition}[Theorem]{Proposition}
\newenvironment{proposition}{\begin{Proposition} }{\end{Proposition}}
\newenvironment{Proof}{\begin{proof} }{\end{proof}}
\newenvironment{theorem}{\begin{Theorem} }{\end{Theorem}}
\theoremstyle{definition}
\newtheorem{Remark}[Theorem]{Remark}
\newenvironment{remark}{\begin{Remark} \rm}{\end{Remark}}
\def\dist{{\rm dist}\, }
\def\al{{\alpha}}
\def\D{{\Delta}}
\def\p{{\varphi}}
\def\o{{\omega}}
\def\O{{\Omega}}
\def\n{{\mathbf n}}
\def\bfA{{\mathbf A}}
\def\bfE{{\mathbf E}}
\def\bfH{{\mathbf H}}
\newcommand{\oveta}{{\overline \eta}_u}
\newcommand{\ovxi}{{\overline \xi}_u}
\newcommand{\ovtheta}{{\overline \theta}_u}
\begin{document}

\title{Standing Waves for Nonautonomous Klein-Gordon-Maxwell Systems}
\author{Monica Lazzo \thanks{
 Dipartimento di Matematica, Universit\`a degli Studi di Bari Aldo Moro, via E.~Orabona 4, 70125 Bari, Italy; e-mail: monica.lazzo@uniba.it,
 lorenzo.pisani@uniba.it} \and Lorenzo Pisani \footnotemark[1]}
 \date{}
\maketitle

  \abstract{We study a Klein-Gordon-Maxwell system, in a bounded spatial domain, under Neumann boundary conditions on the electric potential. We allow a nonconstant coupling coefficient. For sufficiently small data, we find infinitely many standing waves. \\[2mm]
  {\bf Keywords:} Klein-Gordon-Maxwell systems, standing waves, variational methods, Ljusternik-Schnirelmann theory \\
  {\bf MSC 2010:} 35J50, 35J57, 35Q40, 35Q60}

\section{Introduction}\label{intro}
We are interested in the system of nonautonomous elliptic equations
\begin{equation}\label{KGM}
\begin{alignedat}{2}
\Delta u  &= m^2 \, u - \bigl(\o + q(x) \, \phi\bigr)^2 u   \qquad & \hbox{in $\O$,} \\[1mm]
\Delta \phi &= q(x) \, \bigl(\o+ q(x) \, \phi\bigr) \, u^2 & \hbox{in $\O$.} \end{alignedat}
\end{equation}
where $\D$ is the Laplace operator in $\R^3$, $\O \subset \R^3$ is a bounded and  smooth domain, $m, \o \in \R$,  $q \in L^6(\O) \setminus\{0\}$. We complement these equations with the boundary conditions
\begin{subequations}\label{BC-no}
\begin{alignat}{2}
u  &= 0  \qquad & \hbox{on $\partial \O$,} \label{BC-a} \\
\dfrac{\partial \phi}{\partial \n}  &= \al & \hbox{on $\partial \O$,}
\label{BC-b}
\end{alignat}
\end{subequations}
where $\n$ is the unit outward normal vector to $\partial \O$ and $\al \in H^{1/2}(\partial \O)$.

We look for {\em nontrivial} solutions, by which we mean pairs $(u,\phi) \in H^1_0(\O) \times H^1(\O)$, satisfying~\eqref{KGM}-\eqref{BC-no} in the usual weak sense, with $u\ne0$. Note that, if $(u,\phi)$ is a nontrivial solution, the pair $(-u,\phi)$ is a nontrivial solution  as well.

System~\eqref{KGM} arises in connection with the so-called Klein-Gordon-Maxwell equations, which model the interaction of a charged matter field with the electromagnetic field $(\bfE,\bfH)$.
They are the Euler-Lagrange equations of the Lagrangian density
\begin{equation*}
\begin{split}
{\cal L}_{KGM} & = \tfrac{1}{2} \bigl( |(\partial_t + i \, q \, \phi) \, \psi|^2 - |(\nabla - i \, q \, \bfA) \, \psi|^2 - m^2 \, |\psi|^2 \bigr) + {} \\[2mm]
& + \tfrac{1}{8\pi} \bigl(|
\nabla \phi + \partial_t \bfA|^2 - |\nabla \times \bfA|^2 \bigr) \, ,
\end{split}
\end{equation*}
where $\psi$ is a complex-valued function representing the matter field, while $\phi$ and $\bfA$ are the gauge potentials, related to the electromagnetic field via the equations
$\bfE = -\nabla \phi - \partial_t \bfA$, $\bfH = \nabla \times \bfA$.
For the derivation of the Lagrangian density, and details on the physical model, we refer to~\cite{BF-2014, bleeker, felsager}.

Confining attention to {\em standing waves}, in equilibrium with a purely electrostatic field, amounts to imposing $\psi(t,x) = e^{i \o t} \, u(x)$, where $u$ is a real-valued function and $\o$ is a real number, $\bfA = 0$, and $\phi=\phi(x)$. With  these choices, the Klein-Gordon-Maxwell equations considerably simplify and become~\eqref{KGM}.
In the physical model, the boundary condition~\eqref{BC-a} means that the matter field is confined to the region $\O$, while~\eqref{BC-b} amounts to prescribing the normal component of the electric field on $\partial \O$. Up to a sign, the surface integral $\int_{\partial \O} \al \, d\sigma$ represents the flux of the electric field through the boundary of $\O$, and thus, the total charge contained in $\O$.

Let us point out that in the physical model the coupling coefficient $q$ is constant (see~\cite[Subsection~5.15]{BF-2014});
nonconstant coefficients, however, are worth investigating from a mathematical viewpoint.

For a constant coupling coefficient $q$, existence results for nontrivial solutions to Problem~\eqref{KGM}-\eqref{BC-no} were obtained in~\cite{DPS-2010}.
In this case, an invariance property holds and solutions to~\eqref{KGM}-\eqref{BC-no}, for arbitrary $\o$, correspond to solutions of the same system with $\o=0$ (that is, {\em static} solutions). Thus, with no loss of generality, in~\cite{DPS-2010} the authors confined their attention to static solutions.
Their results were generalized to the nonautonomous case  in~\cite{DPS-2014}, assuming that the coupling coefficient $q$ vanishes at most on a set of measure zero; this restriction on the zero-level set of $q$ was later removed in~\cite{lp1}.
Note that, absent the invariance property,
the existence of solutions to~\eqref{KGM}-\eqref{BC-no} with $\o\ne 0$ does not follow from the results in~\cite{DPS-2014, lp1}. Investigating Problem~\eqref{KGM}-\eqref{BC-no} with $\o \ne 0$ is precisely the goal of the present paper.

\begin{theorem}\label{main1}
Assume $\int_{\partial \O} \al \, d\sigma \ne 0$.  Suppose that $|\o| \le |m|$ and the function~$q$ satisfies the following condition: \\[2mm]
{\rm (Q)} \ there exists $q_0 \in (0,+\infty)$ such that $\bigl|\{x\in \O \, | \, 0<|q(x)|<q_0\}\bigr|=0$. \\[2mm]
Then, if
$\|\al\|_{H^{1/2}(\partial \O)} \, \|q\|_{L^6(\O)}$ is sufficiently small,
Problem~\eqref{KGM}-\eqref{BC-no} has a sequence $\{(u_n,\phi_n)\}$ of distinct nontrivial solutions with the following properties:
\begin{enumerate}
\item[{\rm (i)}] $u_0 \ge 0$ in $\O$;
\item[{\rm (ii)}] every bounded subsequence $\{u_{k_n}\}$ satisfies $\|q \, u_{k_n}\|_{L^3(\O)} \to 0$ as $n \to \infty$.
\end{enumerate}
\end{theorem}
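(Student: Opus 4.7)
My plan is to eliminate $\phi$ from~\eqref{KGM}--\eqref{BC-no} and apply a $\mathbb{Z}_2$-symmetric minimax theorem to an even reduced functional of $u$ alone. First, for each $u\in H^1_0(\O)$ with $\int_\O q^2u^2\,dx>0$, I would solve the linear Neumann problem
\begin{equation*}
-\D\phi+q^2u^2\,\phi=-\o\,q\,u^2 \text{ in } \O, \qquad \tfrac{\partial\phi}{\partial\n}=\al \text{ on } \partial\O,
\end{equation*}
uniquely in $H^1(\O)$ by Lax--Milgram, and denote its solution by $\Phi_u$. Since the data depend on $u$ only through $u^2$, one has $\Phi_{-u}=\Phi_u$. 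Condition~(Q)---the uniform lower bound $|q|\ge q_0$ on $\{q\ne 0\}$---should let me extend the construction to the whole of $\{u\ne 0\}\subset H^1_0(\O)$ and derive $H^1$-estimates on $\Phi_u$ of the schematic form $\|\Phi_u\|_{H^1}\le C(\|\al\|_{H^{1/2}}+|\o|\,\|q\|_{L^6}\,\|u\|_{L^6}^2)$, obtained by testing the $\phi$-equation against $\Phi_u$ and using Sobolev and trace inequalities.

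Next, I would set $I(u):=\mathcal{S}(u,\Phi_u)$, where $\mathcal{S}$ is the natural action associated with~\eqref{KGM}--\eqref{BC-no}. Using the $\phi$-equation to eliminate $|\nabla\Phi_u|^2$ and $\int q^2u^2\Phi_u^2$, a direct computation should give
\begin{equation*}
I(u)=\tfrac12\int_\O|\nabla u|^2\,dx+\tfrac{m^2-\o^2}{2}\int_\O u^2\,dx-\tfrac{\o}{2}\int_\O q u^2\Phi_u\,dx+\tfrac12\int_{\partial\O}\al\,\Phi_u\,d\sigma.
\end{equation*}
The hypothesis $|\o|\le|m|$ makes the quadratic part nonnegative; $I$ is $\mathbb{Z}_2$-symmetric under $u\mapsto-u$ thanks to $\Phi_{-u}=\Phi_u$; and critical points of $I$ yield nontrivial weak solutions of~\eqref{KGM}--\eqref{BC-no} (the hypothesis $\int_{\partial\O}\al\,d\sigma\ne0$ rules out $u\equiv 0$ as a solution of the full system).

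Finally, the smallness of $\|\al\|_{H^{1/2}(\partial\O)}\|q\|_{L^6(\O)}$ should be used to absorb the nonlinear coupling $-\tfrac{\o}{2}\int qu^2\Phi_u$ into the quadratic part of $I$ and to verify the Palais--Smale condition on suitable sublevels. A $\mathbb{Z}_2$-symmetric minimax theorem (Ljusternik--Schnirelmann via Krasnoselskii genus on spheres in $H^1_0(\O)$) will then produce a sequence $\{\pm u_n\}$ of distinct critical orbits of $I$. Property~(i) follows from $I(|u|)=I(u)$, so one may take $u_0=|u_0|\ge 0$; property~(ii) should be extracted from the variational characterization of the critical levels, since the nonlinear terms in $I$ are controlled by $\|q u\|_{L^3}$, so along any bounded subsequence $\{u_{k_n}\}$ one must have $\|qu_{k_n}\|_{L^3}\to 0$. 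The main obstacle will be the Palais--Smale verification and the clean interaction of~(Q) with the reduced functional---in particular, quantitative control of $\Phi_u$ near the vanishing set $\{q=0\}$ and propagation of the smallness assumption through the genus filtration.
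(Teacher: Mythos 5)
Your overall strategy (eliminate $\phi$, reduce to an even functional of $u$ alone, apply genus-based Ljusternik--Schnirelmann theory) is the same as the paper's, but two of your key technical claims are wrong, and they sit exactly where the real difficulty of the problem lies. First, condition (Q) does \emph{not} let you extend the map $u\mapsto\Phi_u$ to all of $\{u\ne0\}$: (Q) permits $q$ to vanish on a set of positive measure, so there exist $u\ne0$ with $q\,u=0$, and for such $u$ the bilinear form of $-\D\phi+q^2u^2\phi$ is not coercive on $H^1(\O)$ (constants lie in its kernel), so Lax--Milgram fails. The natural domain is the open set $\Lambda_q=\{u\in H^1_0(\O):q\,u\ne0\}$, and the heart of the proof is the behaviour of the reduced functional near $\partial\Lambda_q$. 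Relatedly, your schematic bound $\|\Phi_u\|_{H^1}\le C(\|\al\|_{H^{1/2}}+|\o|\,\|q\|_{L^6}\|u\|_{L^6}^2)$ cannot hold uniformly: testing the $\phi$-equation against $\Phi_u$ controls $\|\nabla\Phi_u\|_2^2+\int_\O q^2u^2\Phi_u^2\,dx$, which loses control of the mean of $\Phi_u$ as $\|q\,u\|_3\to0$; in fact the paper shows (Lemma~\ref{eta}(c)) that $A\,\oveta\to\infty$ in that regime. This blow-up is not a nuisance to be estimated away---it is the mechanism that makes sublevels of $J$ complete, makes the Palais--Smale condition workable on the open set $\Lambda_q$, and ultimately yields property~(ii). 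It is also the true role of the hypothesis $\int_{\partial\O}\al\,d\sigma\ne0$, to which you instead assign the irrelevant job of ruling out $u\equiv0$ (excluded anyway by the definition of nontrivial solution).

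Second, you never identify where (Q) actually enters. After decomposing $\Phi(u)=\xi_u+\eta_u+\o\,\theta_u$, the term $2\,\o\,A\,\ovtheta$ in~\eqref{J-dec} is the one that threatens lower boundedness of $J$, and (Q) is used precisely to obtain the uniform bound $\|\theta_u\|_\infty\le1/q_0$ (Lemma~\ref{theta-bounded}, writing $q=q^2h$ with $h$ bounded and invoking a comparison argument); without such a bound the hypothesis $|\o|\le|m|$ alone does not suffice, which is why Theorem~\ref{main2} requires $|\o|\le|m|/\sqrt2$. Finally, your derivation of~(ii) (``the nonlinear terms are controlled by $\|qu\|_{L^3}$'') is not an argument. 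The paper's proof is: the critical levels tend to $+\infty$, and for a \emph{bounded} sequence $J(u_n)\to\infty$ forces $\|q\,u_n\|_3\to0$, because along bounded sequences with $\|q\,u_n\|_3$ bounded away from zero the maps $u\mapsto\eta_u,\theta_u$ converge up to subsequences and $J$ remains bounded (Lemma~\ref{eta-theta}(a) and Proposition~\ref{if-and-only-if}). You would need to supply all of this---the restriction to $\Lambda_q$, the blow-up of $A\,\oveta$ near $\partial\Lambda_q$, the uniform control of $\theta_u$ via (Q), and the compactness dichotomy behind~(ii)---to close the proof.
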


A function $q$ that satisfies~(Q) may vanish in $\O$, even on a set of positive measure; however, where $q$ does not vanish, it must be bounded away from zero.
This condition appears in results on the closedness of the range of the multiplication operator $u \mapsto q \, u$ (see~\cite{ramos}).

Without assumption~(Q), we find nontrivial solutions provided that $\o$ varies in a smaller range.

\begin{theorem}\label{main2}
Assume $\int_{\partial \O} \al \, d\sigma \ne 0$. Suppose that $|\o| \le |m|/\sqrt 2$. Then the same conclusions as in Theorem~\ref{main1} hold.
\end{theorem}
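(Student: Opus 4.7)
\emph{Plan.} The idea is to run the same Ljusternik--Schnirelmann scheme as in Theorem~\ref{main1}, replacing the compactness/coercivity ingredient that~(Q) provided with a sharper quadratic estimate made available by the stronger bound $|\omega|\le|m|/\sqrt{2}$. I would begin with the standard variational reduction already set up in the paper: for each $u\in H_0^1(\Omega)$ with $\int_\Omega q^2u^2>0$, the second equation in~\eqref{KGM} together with~\eqref{BC-b} is uniquely solvable for $\phi=\phi_u$, and the auxiliary potentials $\ovtheta,\ovxi,\oveta$ decompose $\phi_u$ into a piece linear in $\omega$ and a piece driven by the Neumann datum $\alpha$. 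Plugging $\phi=\phi_u$ into the natural strongly-indefinite action and using the equation for $\phi_u$ yields the $C^1$, even, reduced energy
\[
F(u)=\tfrac12\!\int_\Omega|\nabla u|^2+\tfrac{m^2-\omega^2}{2}\!\int_\Omega u^2-\tfrac{\omega}{2}\!\int_\Omega q\phi_u u^2+\tfrac12\!\int_{\partial\Omega}\!\alpha\phi_u\,d\sigma,
\]
whose nontrivial critical points correspond to nontrivial solutions of~\eqref{KGM}--\eqref{BC-no}.

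The new input is the weighted pointwise bound $|2\omega\, q\phi_u u^2|\le 2\omega^2 u^2+\tfrac12 q^2\phi_u^2 u^2$, which after integration gives
\[
\Bigl|\omega\!\int_\Omega q\phi_u u^2\Bigr|\le\omega^2\!\int_\Omega u^2+\tfrac14\!\int_\Omega q^2\phi_u^2 u^2.
\]
Inserting this into the expression for $F$,
\[
F(u)\ge\tfrac12\!\int_\Omega|\nabla u|^2+\tfrac{m^2-2\omega^2}{2}\!\int_\Omega u^2-\tfrac18\!\int_\Omega q^2\phi_u^2 u^2+\tfrac12\!\int_{\partial\Omega}\!\alpha\phi_u\,d\sigma,
\]
and the coefficient $m^2-2\omega^2\ge 0$ is nonnegative precisely when $|\omega|\le|m|/\sqrt{2}$. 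The remaining term $\int q^2\phi_u^2 u^2$ is controlled by testing the $\phi_u$-equation with $\phi_u$ itself and re-applying the same AM--GM bound to the cross term, which reduces everything to the boundary integral $\int_{\partial\Omega}\alpha\phi_u$; a trace estimate combined with the smallness of $\|\alpha\|_{H^{1/2}(\partial\Omega)}\|q\|_{L^6(\Omega)}$ absorbs that contribution and upgrades the above inequality to honest coercivity of $F$.

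With coercivity established, I would verify the Palais--Smale condition for $F$ on the open admissible set $\{u\in H_0^1(\Omega):\int_\Omega q^2u^2>0\}$ and then apply the classical even Ljusternik--Schnirelmann minimax to produce a sequence $\{u_n\}$ of distinct nontrivial critical points of $F$. Conclusions~(i) and~(ii) of Theorem~\ref{main1} then transfer verbatim, because the arguments yielding them depend on~(Q) only through the coercivity/compactness that is now supplied by the AM--GM estimate above.

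\emph{Main obstacle.} The delicate point is the Palais--Smale verification in the absence of~(Q). In Theorem~\ref{main1}, (Q) enters through the closed-range property of the multiplication $u\mapsto qu$, which upgrades weak convergence of a PS sequence to strong convergence of $\{q u_n\}$; here one has no such structural tool and must rely entirely on the sharpened coercivity together with the smallness of the data to exclude loss of mass or concentration along PS sequences. This is exactly where the strictly stronger inequality $|\omega|\le|m|/\sqrt 2$, as opposed to $|\omega|\le|m|$, is indispensable.
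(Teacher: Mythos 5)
Your central idea --- a Young/AM--GM inequality on the $\o$-coupling term, with the threshold $m^2-2\o^2\ge0$ explaining the constant $1/\sqrt2$ --- is exactly the mechanism the paper uses (Lemma~\ref{theta}), so the overall route is the right one. But as sketched it has a genuine gap in the constants. You apply the inequality to the full term $\int_\O \o\, q\,\phi_u\, u^2\,dx$ and are then left with $\tfrac14\int_\O q^2\phi_u^2 u^2\,dx$, which you propose to control by testing the $\phi_u$-equation with $\phi_u$; but that energy identity contains the same cross term $\int_\O\o\, q\, u^2\phi_u\,dx$ again, so a second AM--GM is needed and each pass leaks another multiple of $\o^2\int_\O u^2\,dx$. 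Chasing the constants, this scheme only closes for $\o^2$ strictly below $m^2/2$ and does not reach the stated range $|\o|\le|m|/\sqrt2$. The paper avoids the leak by decomposing $\p_u=\xi_u+\eta_u+\o\,\theta_u$ and isolating the dangerous term as $2\,\o\, A\,\ovtheta=-2\int_\O\o\, q\, u^2\,\eta_u\,dx$ (identity~\eqref{MIX1}); since the equation for $\eta_u$ has only the constant source $A/|\O|$, its energy identity gives $\int_\O(q\,u)^2\eta_u^2\,dx\le A\,\oveta$ with no further cross term, and the resulting $A\,\oveta$ cancels exactly against the $+A\,\oveta$ already present in~\eqref{J-dec}. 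That exact cancellation is what delivers the coefficient $m^2-2\o^2$ with no loss.

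A second gap: coercivity alone is not enough on the open set $\Lambda_q$. One must also show that $J$ blows up along sequences with $\|q\,u_n\|_3\to0$ (Proposition~\ref{boundary}(a)), which is what makes the sublevels complete and keeps Palais--Smale sequences away from $\partial\Lambda_q$. Without~(Q) this is the genuinely delicate point, and the paper handles it with the refined estimate of Lemma~\ref{eta-theta}(b), splitting into the cases of bounded and unbounded $\{\|\nabla u_n\|_2\}$; your sketch does not address it. Relatedly, your diagnosis of how~(Q) enters Theorem~\ref{main1} is off: it is used only to obtain the uniform bound $\|\theta_u\|_\infty\le 1/q_0$ (Lemma~\ref{theta-bounded}), not to upgrade convergence of $\{q\,u_n\}$ along Palais--Smale sequences, and the Palais--Smale argument itself (Proposition~\ref{P-S}) is identical in both cases.
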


Following the approach in~\cite{lp1}, we apply Ljusternik-Schnirelmann theory to a functional $J$, defined in an open subset $\Lambda_q$ of $H^1_0(\O)$, whose critical points correspond to nontrivial solutions to Problem~\eqref{KGM}-\eqref{BC-no}. Compared to the functional considered in~\cite{lp1}, here $J$~contains several additional terms, which depend on $\o$. Assuming $|\o| \le |m|$, all but one of these additional terms can be easily dealt with and entail no major complications in the study of~$J$.
Under assumption~(Q), the exceptional term (the third summand in~\eqref{J-dec} below) can be controlled in a uniform fashion (see Lemma~\ref{theta-bounded}).
Without assumption~(Q), uniform bounds on the exceptional term are not available (see Remark~\ref{noQ}). However, $J$ retains its main properties for smaller values of $\o$, as in Theorem~\ref{main2}.

If the data are as small as in Theorem~\ref{main1},
the condition $\int_{\partial \O} \al \, d\sigma \ne 0$
is necessary for the existence of nontrivial solutions, as in~\cite{DPS-2010, DPS-2014, lp1}.

\begin{theorem}\label{main3}
Assume that $|\o| \le |m|$ and $\|\al\|_{H^{1/2}(\partial \O)} \, \|q\|_{L^6(\O)}$ is as small as required in Theorem~\ref{main1}. If $\int_{\partial \O} \al \, d\sigma = 0$, then Problem~\eqref{KGM}-\eqref{BC-no} has no nontrivial solutions.
\end{theorem}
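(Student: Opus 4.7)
I would argue by contradiction. Suppose $(u,\phi)\in H^1_0(\O)\times H^1(\O)$ is a nontrivial solution of \eqref{KGM}--\eqref{BC-no}, so $u\not\equiv 0$, and set $\xi := \o + q\phi$. The plan is to derive $u\equiv 0$.

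The first ingredient is the \emph{charge identity}: integrating the second equation of \eqref{KGM} over $\O$ and applying the divergence theorem together with \eqref{BC-b} gives
\[ \int_{\partial\O}\al\,d\sigma \;=\; \int_\O q\xi\,u^2\,dx, \]
so the hypothesis $\int_{\partial\O}\al\,d\sigma=0$ reduces to $\int_\O q\xi u^2\,dx=0$. Next, I would test the first equation of \eqref{KGM} against $u$ (using $u|_{\partial\O}=0$) and the second against $\phi$, and combine them via the algebraic identity $q\xi\phi=\xi^2-\o\xi$ to obtain the balance
\[ \|\nabla u\|_2^2+(m^2-\o^2)\|u\|_2^2+\|\nabla\phi\|_2^2 \;=\; \o\int_\O q\phi u^2\,dx + \int_{\partial\O}\al\phi\,d\sigma. \]
Under $|\o|\le|m|$ the coefficient $m^2-\o^2$ is nonnegative, so the left-hand side dominates $\|\nabla u\|_2^2+\|\nabla\phi\|_2^2$. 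Since $\int_{\partial\O}\al\,d\sigma=0$, the boundary integral is unchanged if $\phi$ is shifted by a constant; trace plus Poincar\'e therefore bound it by $C\,\|\al\|_{H^{1/2}(\partial\O)}\|\nabla\phi\|_2$.

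To treat the cross term $\o\int q\phi u^2$, I would decompose $\phi=\bar\phi+\widetilde\phi$ into mean plus mean-zero part. Rewriting the charge identity as $\o\int qu^2+\bar\phi\int q^2u^2+\int q^2\widetilde\phi\,u^2=0$ allows the $\bar\phi$-piece of $\o\int q\phi u^2$ to be moved to the left as the nonnegative quantity $\bar\phi^2\int q^2u^2$, leaving only $\widetilde\phi$-terms on the right. By H\"older, the Sobolev embeddings $H^1_0,\,H^1\hookrightarrow L^p$ for $p\le 6$, and Poincar\'e ($\|\widetilde\phi\|_6\le C\|\nabla\phi\|_2$), the remainder is controlled by $C\,\|q\|_6\bigl(|\o|+|\bar\phi|\,\|q\|_6\bigr)\|\nabla\phi\|_2\|\nabla u\|_2^2$; the $|\bar\phi|$ factor is absorbed into $\bar\phi^2\int q^2u^2$ via Young's inequality, using the quantitative coercivity of $-\D+q^2u^2$ that assumption~(Q) provides---this is the very mechanism already at work in the construction of $\phi_u$ for Theorem~\ref{main1} (cf.~Lemma~\ref{theta-bounded}).

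Collecting estimates, I arrive at an inequality of the form
\[ \|\nabla u\|_2^2+\|\nabla\phi\|_2^2 \;\le\; C_1\bigl(\|\al\|_{H^{1/2}}\|q\|_6\bigr)\bigl(\|\nabla u\|_2^2+\|\nabla\phi\|_2^2\bigr)+C_2\|\al\|_{H^{1/2}}^2, \]
with $C_1\to 0$ as $\|\al\|_{H^{1/2}}\|q\|_6\to 0$. For data as small as demanded in Theorem~\ref{main1}, $C_1<1$; combined with the analogous bound obtained by re-running the absorption on the $u$-equation alone (so that the inhomogeneous $\|\al\|^2$ term does not prevent $u$ from vanishing), this forces $\|\nabla u\|_2=0$, hence $u\equiv 0$, contradicting nontriviality. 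The hardest point, as anticipated, is the Young-absorption in paragraph three: the quantitative use of~(Q) is essential to control the cross term, which is also why the parallel Theorem~\ref{main2}, lacking~(Q), compensates by tightening the range of $\o$ rather than the smallness of the data.
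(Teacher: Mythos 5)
Your starting identities are correct (the charge identity $\int_\O q(\o+q\phi)u^2\,dx=\int_{\partial\O}\al\,d\sigma=0$, and the energy balance obtained by testing the two equations against $u$ and $\phi$), but the absorption step in your third paragraph does not go through, and that is where the proof actually lives. After splitting $\phi=\overline\phi+\widetilde\phi$ and using the charge identity to move $-\overline\phi^{\,2}\int q^2u^2$ to the left, you are still left with terms such as $\o\int_\O q\,\widetilde\phi\,u^2\,dx$, which H\"older and Sobolev only control by $C\,|\o|\,\|q\|_6\,\|\nabla\phi\|_2\,\|\nabla u\|_2^2$. This is \emph{cubic} in the unknowns and cannot be absorbed into the quadratic left-hand side $\|\nabla u\|_2^2+\|\nabla\phi\|_2^2$, however small its coefficient, because there is no a priori bound on $\|\nabla\phi\|_2$ or $\|\nabla u\|_2$ for an arbitrary solution; moreover the hypothesis makes only the product $\|\al\|_{1/2}\,\|q\|_6$ small, while this cross term carries no factor of $\al$ at all, so its coefficient is not even small. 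The appeal to assumption~(Q) is also off target: Theorem~\ref{main3} does not assume~(Q), the paper's proof never uses it, and (Q) does not yield a uniform coercivity constant for $-\D+(qu)^2$ --- that constant degenerates as $\|q\,u\|_3\to0$, which is precisely why $A\,\oveta\to\infty$ in Lemma~\ref{eta}(c). Finally, the endgame is incomplete: an inequality $X\le C_1X+C_2\|\al\|_{1/2}^2$ with $C_1<1$ yields only a bound on $X$, not $u=0$, and ``re-running the absorption on the $u$-equation alone'' is asserted rather than carried out.

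The paper sidesteps all of this by first shifting to $\p=\phi-\chi$, where $\chi$ solves~\eqref{chi}; since $A=0$, $\chi$ is harmonic, satisfies $\|\chi\|_\infty\le\kappa\,\|\al\|_{1/2}$, and $\p$ has homogeneous Neumann data, so no boundary term survives. Testing the $u$-equation by $u$, expanding $\bigl(\o+q(\p+\chi)\bigr)^2$, and then substituting the identity obtained by testing the $\p$-equation by $\p$ makes the dangerous terms $\int(qu)^2\p\,\chi$ and $\int\o\,q\,u^2\p$ reappear as $2\|\nabla\p\|_2^2+\int(qu)^2\p^2$, i.e.\ with a favorable sign, so they can simply be dropped. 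What remains to estimate are only $\int 2\,\o\,q\,u^2\chi$ and $\int(qu)^2\chi^2$, each carrying a factor $\|\chi\|_\infty$ and hence a factor of $\|\al\|_{1/2}\,\|q\|_6$, which are absorbed into $\|\nabla u\|_2^2$ exactly as in~\eqref{bound-on-P1}--\eqref{bound-on-P2}, giving $0\ge C_1\,\|\nabla u\|_2^2$ directly. The structural point your argument is missing is that the $\phi$-equation must be used to convert the cross terms into nonnegative quantities, not merely to estimate them.
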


The assumptions in Theorems~\ref{main1}-\ref{main3} are consistent with the literature, albeit on problems in unbounded domains.
Limitations on the range of $\omega$ already appeared in~\cite{strauss}; later, they were required in~\cite{BF-2002} and the subsequent stream of related papers. Conditions on the smalless of $q$ (if $\alpha$ is fixed) were imposed, for instance, in \cite[Theorem 104]{BF-2014}, in line with Coleman's conjecture in~\cite{coleman}.

We conclude this section by mentioning some recent work, loosely related to our own. In addition to the papers cited in~\cite{lp1}, we refer to~\cite{chen-tang,chen-li} for results on Klein-Gordon-Maxwell systems in $\R^3$.
We also refer to~\cite{miya-moura-ruv} for a variant of the system involving fractional operators; to~\cite{clapp-ghim-mich,dav-med-pomp} for results on Klein-Gordon-Maxwell-Proca systems; to~\cite{bon-dav-pomp, chen-song} for Klein-Gordon systems coupled with Born-Infeld type equations.

The paper is organized as follows. In Section~\ref{prelims} we collect some preliminary results and introduce the set $\Lambda_q$. In Section~\ref{J-birth} we define the functional~$J$ and decompose it into the sum of several components, which we analyze separately. In Section~\ref{prop-of-J} we show that $J$ satisfies the requirements in Ljusternik-Schnirelmann theory. Finally, in Section~\ref{proof-of-main} we prove Theorems~\ref{main1}-\ref{main3}.

\section{Preliminaries}\label{prelims}

Throughout the paper we will use the following notation:
\begin{itemize}
\item For any integrable function $f:\O\to \R$, $\overline f$ is the average of $f$ in $\O$ and $\|f\|_p$ is the usual norm in $L^p(\O)$ ($p\in[1,\infty]$);
\item $H^1_0(\O)$ is endowed with the norm $\|\nabla f\|_2$;
\item $H^1(\O)$ is endowed with the norm $\|f\|:= \left(\|\nabla f\|_2^2 + \left|\overline f\right|^2\right)^{1/2}$;
\item for $p\in(1,6]$, $\sigma_p$ is the smallest positive number such that
    $\|f\|_p \le \sigma_p \, \|\nabla f\|_2$ for every $f\in H^1_0(\O)$;
\item for $p\in(1,6]$, $\tau_p$ is the smallest positive number such that
    $\|f\|_p \le \tau_p \, \|f\|$ for every $f\in H^1(\O)$;
\item $A:= \int_{\partial \O} \al \, d\sigma$, \ $\|\al\|_{1/2} := \|\al\|_{H^{1/2}(\partial \O)}$.
\end{itemize}

\subsection{Reduction to homogeneous boundary conditions}

As in~\cite{lp1}, we begin by turning Problem \eqref{KGM}-\eqref{BC-no} into an equivalent problem with homogeneous boundary conditions in both variables.

Let $\chi \in H^2(\O)$ be the unique solution of
\begin{equation}\label{chi}
\Delta \chi = \dfrac{A}{|\O|} \quad \hbox{in $\O$}\, , \qquad
 \dfrac{\partial \chi}{\partial \n} = \al \quad \hbox{on $\partial \O$}\, , \qquad
\int_\O \chi \, dx = 0 \, .
\end{equation}
Note that, by elliptic regularity theory and Sobolev's inequalities, there exists $\kappa \in(0,\infty)$ such that
\begin{equation}\label{chi-infty}
\|\chi\|_{\infty} \le \kappa \, \|\al\|_{1/2} \, .
\end{equation}
With $\varphi:= \phi-\chi$, Problem~\eqref{KGM}-\eqref{BC-no} is equivalent to
\begin{equation}\label{PROB}
\begin{cases}
\Delta u = m^2 u - \bigl(\o + q \, (\p+\chi)\bigr)^2 \, u \quad & \hbox{in $\O$,} \\[1mm]
\Delta \p =  q \, \bigl(\o + q \, (\p+\chi)\bigr) \, u^2 -  \dfrac{A}{|\O|} & \hbox{in $\O$,} \\[1mm]
\hskip 3.6mm u = \dfrac{\partial \p}{\partial \n} = 0 & \hbox{on $\partial \O$.}
\end{cases}
\end{equation}

Weak solutions of~\eqref{PROB} correspond to critical points of
the functional $F$ defined in $H^1_0(\O) \times H^1(\O)$ by
\begin{equation*}
F(u,\p) =  \|\nabla u\|_2^2 +  \int_\O \bigl(m^2 - \bigl(\o + q\, (\p+\chi)\bigr)^2 \bigr) u^2 \, dx - \|\nabla \p\|_2^2 + 2 \, A \, \overline \p \, .
\end{equation*}

Indeed, standard computations show that $F$ is continuously differentiable in $H^1_0(\O) \times H^1(\Omega)$ with
\begin{align*}
\langle F'_u(u,\p) , v \rangle  &= 2 \int_\O \Bigl( \nabla u \nabla v  + \bigl(m^2 - \bigl(\o + q\, (\p+\chi)\bigr)^2 \bigr) \, u \, v \Bigr) \, dx \, , \\[1mm]
\langle F'_\p(u,\p) , \psi \rangle  & =
- 2 \int_\O \Bigl( \nabla \p \nabla \psi  + \bigl( q \, \bigl(\o + q\, (\p+\chi)\bigr) u^2  - \dfrac{A}{|\O|} \bigr) \, \psi \Bigr)  \, dx \, ,
\end{align*}
for every $u, v \in  H^1_0(\O)$ and $\p, \psi\in H^1(\O)$.
 Since $F$ is unbounded from above and from below, even modulo compact perturbations, a straightforward application of well-known results in critical point theory is precluded.

We follow~\cite{BF-2002} and associate solutions to Problem~\eqref{PROB} with critical points of a functional~$J$ that depends only on the variable $u$ and falls within the scope of classical critical point theory.
The main ingredient in the construction of $J$ is solving for $\p$ the second equation in~\eqref{PROB}. We will repeatedly apply the following result.

\begin{proposition}\label{equation}
For $b \in L^3(\O) \setminus \{0\}$ and $\rho\in L^{6/5}(\O)$, the homogeneous Neumann problem associated with the equation
\begin{equation}\label{aux-eq}
- \Delta \p + b^2 \, \p = \rho \,
\end{equation}
has a unique solution ${\cal L}_{b}(\rho)$ in $H^1(\O)$.
If $\rho$ does not change sign in $\O$, then $\rho \, {\cal L}_{b}(\rho) \ge 0$ in $\O$. Furthermore, ${\cal L}_{b}(\rho)$ depends continuously on $b$ and $\rho$.
\end{proposition}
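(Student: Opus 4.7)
My plan is to produce ${\cal L}_b(\rho)$ by the Lax--Milgram theorem, obtain the sign property by the standard weak maximum principle trick, and handle continuous dependence via an energy estimate combined with a compactness/Fatou argument. The main obstacle I anticipate is establishing coercivity of the bilinear form (and its counterpart, uniform coercivity as $b$ varies), because $b^2$ only lies in $L^{3/2}(\O)$ and may vanish on a set of large measure, so no direct Poincar\'e-type inequality is available.

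For existence and uniqueness, I would work in $H^1(\O)$ with the bilinear form $a(\p,\psi) := \int_\O \nabla \p \cdot \nabla \psi \, dx + \int_\O b^2 \, \p \, \psi \, dx$ and the functional $\ell(\psi) := \int_\O \rho \, \psi \, dx$. Continuity of $a$ follows from H\"older's inequality and $H^1(\O) \hookrightarrow L^6(\O)$ (with $b^2 \in L^{3/2}(\O)$), and continuity of $\ell$ from $L^{6/5}(\O) = (L^6(\O))^*$. Coercivity with respect to the norm $\|\p\|^2 = \|\nabla \p\|_2^2 + |\overline \p|^2$ I would prove by contradiction: a sequence $\{\p_n\}$ with $\|\p_n\|=1$ and $a(\p_n,\p_n) \to 0$ forces $\|\nabla \p_n\|_2 \to 0$ and $|\overline{\p_n}| \to 1$; Poincar\'e--Wirtinger then gives $\p_n \to c$ in $L^2(\O)$ with $|c|=1$, hence a.e.~along a subsequence, and Fatou's lemma applied to $b^2 \p_n^2$ yields $c^2 \int_\O b^2 \, dx \le 0$, contradicting $b \not\equiv 0$. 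Lax--Milgram then produces a unique ${\cal L}_b(\rho) \in H^1(\O)$.

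For the sign property, assuming $\rho \ge 0$, I would test the weak formulation with $\psi = {\cal L}_b(\rho)^-$. The chain-rule identities give $a({\cal L}_b(\rho),{\cal L}_b(\rho)^-) = -\|\nabla {\cal L}_b(\rho)^-\|_2^2 - \int_\O b^2 ({\cal L}_b(\rho)^-)^2 \, dx \le 0$, while the right-hand side $\int_\O \rho \, {\cal L}_b(\rho)^- \, dx$ is nonnegative; hence both must vanish, and $b \not\equiv 0$ forces ${\cal L}_b(\rho)^- = 0$. The case $\rho \le 0$ is symmetric, with $\psi = {\cal L}_b(\rho)^+$.

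Finally, for continuous dependence, let $(b_n,\rho_n) \to (b,\rho)$ in $L^3(\O) \times L^{6/5}(\O)$ and set $u_n := {\cal L}_{b_n}(\rho_n)$, $u := {\cal L}_b(\rho)$. The crucial step --- and the real obstacle --- is \emph{uniform} coercivity of $a_{b_n}$ for $n$ large; I would establish it by rerunning the contradiction argument with $b_n$ in place of $b$, exploiting $b_n \to b$ a.e.~(along a subsequence) to apply Fatou to $b_n^2 \p_n^2$. This yields $\{u_n\}$ bounded in $H^1(\O)$; extracting a weak limit and passing to the limit in the weak formulation --- via $b_n^2 \to b^2$ in $L^{3/2}(\O)$, the Rellich embedding, and $\rho_n \to \rho$ in $L^{6/5}(\O)$ --- identifies the weak limit with $u$. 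To upgrade to strong $H^1$ convergence, I would test the difference equation $-\Delta(u_n-u) + b_n^2(u_n-u) = (\rho_n-\rho) + (b^2-b_n^2)u$ against $u_n-u$, estimate $\|b^2-b_n^2\|_{3/2} \le \|b+b_n\|_3 \|b-b_n\|_3 \to 0$, and combine with uniform coercivity to conclude $\|u_n-u\| \to 0$.
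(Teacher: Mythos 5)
The paper does not prove this proposition itself; it simply cites \cite[Proposition 2.1, Remarks 2.2 and 2.3]{lp1}, where the argument is the same standard one you give: Lax--Milgram in $H^1(\O)$ with the norm $\left(\|\nabla \p\|_2^2+|\overline{\p}|^2\right)^{1/2}$, coercivity by a contradiction--compactness--Fatou argument exploiting $b\ne 0$, the sign property by testing with the negative part, and continuous dependence via uniform coercivity and an energy estimate on the difference equation. Your proposal is correct and complete (the only step worth spelling out is that $\|\nabla \,{\cal L}_b(\rho)^-\|_2=0$ first forces ${\cal L}_b(\rho)^-$ to be constant, and only then does $\int_\O b^2\,({\cal L}_b(\rho)^-)^2\,dx=0$ with $b\ne 0$ kill that constant), so there is nothing substantive to add.
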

\begin{proof}
See~\cite[Proposition 2.1, Remark 2.2, and Remark 2.3]{lp1}.
\end{proof}

\begin{remark}\label{uniform-bound}
Let $b \in L^3(\O) \setminus \{0\}$ and $h \in L^\infty(\O)$.
Observe that, for any $\tau \in \R$, ${\cal L}_{b}(b^2 \, (h+\tau)) = {\cal L}_{b}(b^2 \, h) + \tau$.
With $\tau = -\inf h$ and $\tau = -\sup h$, respectively, Proposition~\ref{equation} implies
${\cal L}_{b}(b^2 \, h) - \inf h \ge 0$ and ${\cal L}_{b}(b^2 \, h) -\sup h \le 0$ in $\O$, whence
$\inf h \le {\cal L}_{b}(b^2 \, h) \le \sup h$.
\end{remark}

In the construction of $J$, we will consider Equation~\eqref{aux-eq} with $b= q\, u$; to ensure its solvability, we confine $u$ within the set
\begin{equation*}
\Lambda_q := \Bigl\{ u \in H^1_0(\O) \, \bigl| \, q \, u \not= 0 \Bigr\} \, .
\end{equation*}
The set $\Lambda_q$ is the complement in $H^1_0(\O)$ of the kernel of the bounded and linear operator $u \in H^1_0(\O) \mapsto q \, u \in L^3(\Omega)$.
If $q$ vanishes at most on a set of measure zero, then $\Lambda_q = H^1_0(\O) \setminus\{0\}$. In general, $\Lambda_q$ satisfies the following properties.

\begin{proposition}\label{lambda-q}{\rm \cite[Proposition 2.4]{lp1}}~{}
\begin{enumerate}
\item[{\rm (a)}] $\Lambda_q$ is open in $H^1_0(\O)$ with $\partial \Lambda_q = \bigl\{ u \in H^1_0(\O) \, | \, q \, u = 0 \bigr\}$.
\item[{\rm (b)}] If $u\in H^1_0(\O)$ and $\dist(u,\partial \Lambda_q) \to 0$, then $\|q \, u\|_3 \to 0$.
\item[{\rm (c)}] $\Lambda_q$ contains subsets with arbitrarily large genus.
\end{enumerate}
\end{proposition}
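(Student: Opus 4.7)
The plan is to exploit the fact that $\Lambda_q$ is the complement in $H^1_0(\O)$ of the kernel $K := \ker T$ of the bounded linear operator $T\colon H^1_0(\O) \to L^3(\O)$, $u \mapsto q u$; continuity of $T$ follows from H\"older and Sobolev, namely $\|qu\|_3 \le \sigma_6 \|q\|_6 \|\nabla u\|_2$.

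For (a), openness is immediate because $K$ is closed as the kernel of a continuous linear map, and this already gives $\partial \Lambda_q \subseteq K$. To obtain equality I first note that $\Lambda_q \ne \emptyset$: since $q \ne 0$ in $L^6(\O)$, the set $\{q \ne 0\}$ has positive measure, so at a Lebesgue density point I can find a small ball $B \subset \O$ with $|B \cap \{q \ne 0\}| > 0$; then any $v \in H^1_0(\O)$ supported in $B$ with $v > 0$ there satisfies $qv \ne 0$ in $L^3$. For arbitrary $u \in K$, the sequence $u + \tfrac{1}{n} v$ lies in $\Lambda_q$ and converges to $u$ in $H^1_0(\O)$, hence $K \subseteq \overline{\Lambda_q}$, and since $K \cap \Lambda_q = \emptyset$ this yields $K \subseteq \partial\Lambda_q$.

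For (b), given $u_n \in H^1_0(\O)$ with $\dist(u_n,\partial\Lambda_q) \to 0$, part (a) lets me pick $v_n \in K$ with $\|\nabla(u_n-v_n)\|_2 \to 0$; since $qv_n = 0$, the Sobolev estimate above gives
$$\|qu_n\|_3 = \|q(u_n-v_n)\|_3 \le \sigma_6 \, \|q\|_6 \, \|\nabla(u_n-v_n)\|_2 \to 0.$$

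For (c), it suffices, by the standard monotonicity of the Krasnoselskii genus and its value $n$ on the unit sphere of an $n$-dimensional subspace, to exhibit for every $n$ an $n$-dimensional subspace $V_n \subset H^1_0(\O)$ whose nonzero elements all lie in $\Lambda_q$. Applying the Lebesgue density theorem to $\{q \ne 0\}$, I select $n$ pairwise disjoint open balls $B_1,\ldots,B_n \subset \O$ with $|B_i \cap \{q \ne 0\}| > 0$ for each $i$, and I pick $u_i \in H^1_0(\O)$ supported in $B_i$ with $u_i > 0$ on $B_i$. For a nontrivial combination $u = \sum_i c_i u_i$ and any index $i$ with $c_i \ne 0$, on $B_i$ we have $qu = c_i \, q u_i$, which is nonzero on a positive-measure subset of $B_i$; hence $u \in \Lambda_q$, and $V_n := \mathrm{span}(u_1,\ldots,u_n)$ does the job. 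I expect (c) to be the main obstacle, because when $q$ can vanish on sets of positive measure one must resort to a density-point argument to place the balls; parts (a) and (b) then reduce essentially to the linearity of $T$ combined with the Sobolev embedding.
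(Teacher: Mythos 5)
Your proof is correct, and it follows the natural route: openness and part (b) from the fact that $u \mapsto q\,u$ is a bounded linear map $H^1_0(\O) \to L^3(\O)$ with closed kernel equal to $\partial\Lambda_q$, and part (c) from finite-dimensional subspaces spanned by functions supported in disjoint balls meeting $\{q \ne 0\}$ in positive measure. The paper itself does not reprove this statement but quotes it from \cite[Proposition~2.4]{lp1}, where the argument is essentially the one you give.
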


\section{The constrained functional}\label{J-birth}

For $u \in \Lambda_q$, let
\begin{equation}\label{rho-u}
\rho_u := \dfrac{A}{|\O|} - (q \, u)^2 \, \chi-\o \, q \, u^2
\end{equation}
and consider  Equation~\eqref{aux-eq} with $b= q\, u$ and $\rho=\rho_u$. By Proposition~\ref{equation}, the associated homogeneous Neumann problem has a unique solution ${\cal L}_{qu}(\rho_u)$ in $H^1(\O)$.

Let us define the map $\Phi:\Lambda_q \longrightarrow H^1(\O)$
by letting $\Phi(u):= {\cal L}_{qu}(\rho_u)$,
and the functional $J : \Lambda_q \longrightarrow \R$ by letting
$J(u) := F\bigl(u,\Phi(u)\bigr)$.

\begin{proposition}\label{CF}
\begin{enumerate}
\item[{\rm (a)}] The map $\Phi$ is continuously differentiable in $\Lambda_q$. The graph of $\Phi$ is the set
$\bigl\{ (u,\p) \in \Lambda_q \times H^1(\O) \, | \, F'_\p(u,\p)=0 \bigr\}$.
\item[{\rm (b)}] The functional $J$ is continuously differentiable in $\Lambda_q$. Furthermore,
$(u,\p) \in \Lambda_q \times H^1(\O)$ is a critical point of $F$ if, and only if, $u$ is a critical point of $J$ and $\p = \Phi(u)$.
\end{enumerate}
\end{proposition}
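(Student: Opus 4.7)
The plan is to establish (a) by interpreting the equation $F'_\p(u,\p)=0$ as the weak form of the Neumann problem for~\eqref{aux-eq} with $b=qu$ and $\rho=\rho_u$, and then to derive (b) as a straightforward consequence via the chain rule.

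First I would compute directly from the formula for $\langle F'_\p(u,\p), \psi\rangle$ given in the excerpt and observe that, after expanding $\bigl(\o+q(\p+\chi)\bigr)$ and moving the term $(qu)^2\p$ to the left-hand side, the equation $F'_\p(u,\p)=0$ reads
\begin{equation*}
\int_\O \nabla \p \, \nabla \psi \, dx + \int_\O (qu)^2 \, \p \, \psi \, dx = \int_\O \rho_u \, \psi \, dx \qquad \text{for every } \psi \in H^1(\O),
\end{equation*}
which is precisely the weak form of the homogeneous Neumann problem for $-\Delta\p+(qu)^2\p=\rho_u$. For $u\in\Lambda_q$ we have $qu\ne0$ and $\rho_u\in L^{6/5}(\O)$ (by H\"older, since $\chi\in L^\infty$ and $qu\in L^3$), so Proposition~\ref{equation} yields a unique solution, namely $\Phi(u)={\cal L}_{qu}(\rho_u)$. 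This gives the graph characterization in~(a). For the $C^1$ regularity of $\Phi$, I would apply the implicit function theorem to the map $G(u,\p):=F'_\p(u,\p)$, viewed as a $C^1$ function from $\Lambda_q\times H^1(\O)$ into the dual of $H^1(\O)$ (the polynomial structure of $F$ in $(u,\p)$ makes $F\in C^2$, so $G\in C^1$). The key verification is that $G_\p(u,\Phi(u))$ is an isomorphism: its associated bilinear form is (up to the factor $-2$)
\begin{equation*}
(\eta,\psi) \longmapsto \int_\O \nabla \eta \, \nabla \psi \, dx + \int_\O (qu)^2 \, \eta \, \psi \, dx,
\end{equation*}
which, since $qu\not\equiv 0$, is coercive on $H^1(\O)$ by the same Poincar\'e-type argument used in Proposition~\ref{equation}, hence invertible by Lax--Milgram. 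The implicit function theorem then yields $\Phi\in C^1(\Lambda_q, H^1(\O))$.

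For part~(b), $J=F\circ(\mathrm{id},\Phi)$ is $C^1$ by composition, and the chain rule gives
\begin{equation*}
\langle J'(u), v\rangle = \langle F'_u(u,\Phi(u)), v\rangle + \langle F'_\p(u,\Phi(u)), \Phi'(u)v\rangle \qquad \text{for every } v\in H^1_0(\O).
\end{equation*}
Since $F'_\p(u,\Phi(u))=0$ by construction of $\Phi$, the second summand vanishes, so $J'(u)=F'_u(u,\Phi(u))$. Therefore $u$ is a critical point of $J$ iff $F'_u(u,\Phi(u))=0$, which combined with $F'_\p(u,\Phi(u))=0$ is equivalent to $(u,\Phi(u))$ being a critical point of $F$. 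Conversely, if $(u,\p)\in\Lambda_q\times H^1(\O)$ is a critical point of $F$, then $F'_\p(u,\p)=0$ forces $\p=\Phi(u)$ by the uniqueness in part~(a), and $F'_u(u,\p)=0$ then gives $J'(u)=0$.

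The routine points are the chain rule and the algebraic identification of $F'_\p=0$ with~\eqref{aux-eq}; the only step that deserves care is the invertibility of $G_\p(u,\Phi(u))$, which is exactly where the defining condition $u\in\Lambda_q$ (i.e.\ $qu\ne 0$) is used, and this is the same mechanism that ensures Proposition~\ref{equation} applies. I do not anticipate serious obstacles beyond this.
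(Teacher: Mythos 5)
Your proposal is correct and follows the same route the paper takes: the paper's own ``proof'' simply cites \cite[Section~3]{lp1}, where the graph characterization and the $C^1$ regularity of $\Phi$ are obtained exactly as you do (weak formulation of the Neumann problem plus the implicit function theorem for $F'_\p$, with invertibility of the linearization coming from the coercivity guaranteed by $q\,u\ne 0$), and Part~(b) by the same chain-rule computation. Your explicit expansion showing that the extra term $-\o\, q\, u^2$ lands harmlessly in $\rho_u\in L^{6/5}(\O)$ is precisely the verification the paper waves at with the remark that ``all the assertions remain true despite the additional term.''
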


\begin{Proof}
For the proof of Part~(a), see~\cite[Section~3]{lp1}. Note that all the assertions remain true despite the additional term $-\o \, q \, u^2$ appearing in the right-hand side of~\eqref{rho-u} when $\o \ne 0$.
Part~(b) easily follows from Part~(a).
\end{Proof}

On account of Proposition~\ref{CF}, nontrivial solutions to Problem~\eqref{KGM}-\eqref{BC-no} are in one-to-one correspondence with critical points of $J$ in $\Lambda_q$.

\subsection{Decomposition of $J$}
To simplify the notation, let $\p_u:= \Phi(u)$. Since $\p_u$ solves the homogeneous Neumann problem associated with the equation
\begin{equation*}
- \Delta \p + (q \, u)^2 \, \p =  \dfrac{A}{|\O|} - (q \, u)^2 \, \chi\, - \, \o \, q \, u^2\, ,
\end{equation*}
we get
\begin{equation*}
\|\nabla \p_u\|_2^2 =  A  \, \overline \p_u  - \int_\O (q \, u)^2 \, \chi \, \p_u \, dx - \int_\O \o \, q\, u^2 \, \p_u \, dx - \int_\O (q \, u)^2 \, \p_u^2 \, dx  \, ,
\end{equation*}
and thus,
\begin{equation*}\label{J-one}
\begin{split}
J(u)  = F\bigl(u,\p_u\bigr)  &=
\|\nabla u\|_2^2 + \int_\O \bigl(m^2 - (\o + q \, \chi)^2 \bigr) u^2 \, dx + {} \\[2mm]
&  - \int_\O (q\, u)^2 \, \chi \, \p_u \, dx -   \int_\O \o \, q\, u^2 \, \p_u \, dx \, +  A  \, \overline \p_u  .
\end{split}
\end{equation*}

For every $u \in \Lambda_q$, let
\begin{equation*}
\xi_u := -{\cal L}_{qu} ((q \, u)^2 \, \chi)  \, , \quad \eta_u := \frac{A}{|\O|} \, {\cal L}_{qu} (1)  \, , \quad \theta_u := -  {\cal L}_{qu} (q \, u^2) \, .
\end{equation*}
Note that $\eta_u$, $\xi_u$, and $\theta_u$ satisfy the equations
\begin{align}
- \D \xi_u + (q \, u)^2 \, \xi_u &= - \, (q \, u)^2 \, \chi  \, , \label{xi-u} \\[2mm]
- \D \eta_u + (q \, u)^2 \, \eta_u  &=  \dfrac{A}{|\O|} \, , \label{eta-u} \\[1mm]
- \D \theta_u + (q \, u)^2 \, \theta_u &= - \, q \, u^2 \, , \label{theta-u}
\end{align}
respectively, with homogeneous Neumann boundary conditions, and
\begin{equation}\label{decomp}
\p_u = \xi_u + \eta_u + \o \, \theta_u \, .
\end{equation}

We will write the functional $J$ in terms of $u$, $\xi_u$, $\eta_u$, and $\theta_u$.
Observe that
\begin{gather}
\int_\O (q\, u)^2 \, \chi \, \theta_u \, dx = \int_\O q \, u^2 \, \xi_u \, dx \, , \qquad A \, \ovxi  = - \int_\O (q\, u)^2 \, \chi \, \eta_u \, dx \, , \label{MIX2} \\
 A \, \ovtheta   = - \int_\O q \, u^2 \, \eta_u \, dx \, , \label{MIX1}
\end{gather}
these equalities are easily obtained by multiplying each of the equations~\eqref{xi-u}-\eqref{theta-u} by the solution of the remaining two equations. Taking~\eqref{decomp}--\eqref{MIX1} into account yields
\begin{equation}\label{J-dec}
J(u)  = \widetilde J(u) +  A \, \oveta + 2 \, \o \, A  \, \ovtheta
\end{equation}
for every $u\in \Lambda_q$, with
\begin{equation*}
\begin{split}
\widetilde J(u)
& = \|\nabla u\|_2^2 + \int_\O \bigl(m^2 - \o^2) \, u^2 \, dx  -  \int_\O 2 \, \o \, q  \, u^2 \, (\chi+\xi_u) \, dx  +  {} \\[2mm]
&
+ 2 \, A \, \ovxi - \int_\O (q\, u)^2 \, \chi^2 \, dx   - \int_\O (q \, u)^2 \, \chi \, \xi_u \, dx  - \int_\O \o^2 \, q \, u^2 \, \theta_u \, dx \, .
\end{split}
\end{equation*}

\subsection{Properties of $\xi_u$, $\eta_u$,  and $\theta_u$}

Since $\xi_u := -{\cal L}_{qu} ((q \, u)^2 \, \chi)$,
by Remark~\ref{uniform-bound} we have
\begin{equation}\label{xi}
\|\xi_u \|_{\infty} \le \|\chi\|_\infty
\end{equation}
for every $u \in \Lambda_q$.

\begin{lemma}\label{eta} {\rm \cite[Lemma 3.3]{lp1}}~{}
\begin{itemize}
\item[{\rm (a)}] For every $u \in \Lambda_q$, $A \, \eta_u \ge 0$ in $\O$.
\item[{\rm (b)}] There exists $\gamma \in(0,\infty)$ such that
$\|\nabla \eta_u\|_2 \le \gamma \, \|q\, u\|_3^2 \; | \overline \eta_u |$  for every $u \in \Lambda_q$.
\item[{\rm (c)}] Suppose $A\ne 0$. If $u\in \Lambda_q$ and $\|q \, u\|_3 \to 0$, then $A \, \overline \eta_u \to \infty$.
\end{itemize}
\end{lemma}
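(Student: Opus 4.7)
All three parts rest on the elliptic problem
$$-\Delta \eta_u + (qu)^2 \eta_u = \frac{A}{|\O|} \quad \text{in } \O, \qquad \frac{\partial \eta_u}{\partial \n} = 0 \text{ on } \partial \O,$$
which, by linearity of ${\cal L}_{qu}$, admits $\eta_u = {\cal L}_{qu}(A/|\O|)$. Part (a) is then immediate from the sign-preserving statement in Proposition~\ref{equation}: the constant source $A/|\O|$ has fixed sign, so $(A/|\O|)\,{\cal L}_{qu}(A/|\O|) \ge 0$ pointwise in $\O$, i.e.\ $A\,\eta_u \ge 0$.

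For part (b), the key observation is that testing the equation with $\eta_u$ itself produces only $A\,\oveta$ on the right, without any factor of $\|qu\|_3^2$. The correct move is to split $\eta_u = \oveta + v$ with $\overline v = 0$ and use $v$ as the test function; the constant source then pairs to zero against $v$, and the resulting identity is
$$\|\nabla v\|_2^2 + \int_\O (qu)^2 v^2 \, dx = -\,\oveta \int_\O (qu)^2 v \, dx.$$
Dropping the nonnegative second term on the left and estimating the right-hand side by H\"older with exponents $(3,3,3)$ together with the Poincar\'e--Sobolev bound $\|v\|_3 \le C\,\|\nabla v\|_2$ (valid because $\overline v = 0$), one absorbs a factor of $\|\nabla v\|_2$ and obtains $\|\nabla \eta_u\|_2 = \|\nabla v\|_2 \le \gamma\,\|qu\|_3^2\,|\oveta|$.

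For part (c), I would start from the identity
$$\int_\O (qu)^2 \eta_u \, dx = A,$$
obtained by integrating the equation over $\O$ (the Neumann condition annihilates the Laplacian term), and argue by contradiction. If $\{\oveta\}$ remained bounded along some sequence with $\|qu\|_3 \to 0$, part (b) would force $\|\nabla \eta_u\|_2 \to 0$, hence $\|\eta_u - \oveta\|_3 \to 0$ by Poincar\'e, so $\|\eta_u\|_3$ stays bounded; H\"older would then give $|A| \le \|qu\|_3^2\,\|\eta_u\|_3 \to 0$, contradicting $A \ne 0$. Part~(a) then upgrades $|\oveta| \to \infty$ to the signed statement $A\,\oveta \to +\infty$. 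The genuine obstacle is the test-function choice in (b); once the mean-free decomposition is in place, (a) and (c) are essentially free.
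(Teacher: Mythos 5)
Your proof is correct; note that the paper itself gives no proof of this lemma, simply citing \cite[Lemma 3.3]{lp1}. The route you take --- the sign-preservation property of ${\cal L}_{qu}$ from Proposition~\ref{equation} for (a), testing the equation with the mean-free part $\eta_u - \oveta$ and using Poincar\'e--Sobolev for (b), and the integrated identity $\int_\O (q\,u)^2\,\eta_u\,dx = A$ combined with (a) and (b) for (c) --- is precisely the standard argument in that reference, so there is nothing to add.
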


\begin{lemma}\label{theta-bounded}
Suppose that assumption {\rm (Q)} is satisfied. Then, for every $u \in \Lambda_q$, we have $\|\theta_u \|_\infty \le {1}/{q_0}$.
\end{lemma}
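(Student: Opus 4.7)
The plan is to reduce the claim to Remark~\ref{uniform-bound}, which provides a uniform $L^\infty$ bound for $\mathcal{L}_{qu}(b^2 h)$ in terms of $\|h\|_\infty$, with $b=q\,u$. The key observation is that, wherever $q$ does not vanish, the forcing term $-q\,u^2$ in~\eqref{theta-u} can be rewritten as $(q\,u)^2$ times $-1/q$, and assumption~(Q) guarantees that $-1/q$ is essentially bounded by $1/q_0$.

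Concretely, I would introduce the measurable function $h:\O \to \R$ defined by $h(x) := -1/q(x)$ if $q(x) \ne 0$ and $h(x) := 0$ otherwise. By~(Q), the set where $0<|q(x)|<q_0$ has measure zero, so $|h(x)| \le 1/q_0$ a.e.\ in $\O$; in particular $h \in L^\infty(\O)$ with $\|h\|_\infty \le 1/q_0$. Next I would verify the pointwise identity
\begin{equation*}
(q\,u)^2 \, h = -\,q\,u^2 \qquad \text{a.e.\ in $\O$,}
\end{equation*}
which is immediate where $q\ne 0$ (since $q^2 h = -q$ there) and trivial on the zero set of $q$ (both sides vanish). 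Consequently,
\begin{equation*}
\theta_u = -\,\mathcal{L}_{qu}(q\,u^2) = \mathcal{L}_{qu}\bigl((q\,u)^2 \, h\bigr).
\end{equation*}

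At this point Remark~\ref{uniform-bound}, applied with $b = q\,u$ and the above $h$, yields $\inf h \le \theta_u \le \sup h$ a.e.\ in $\O$, whence $\|\theta_u\|_\infty \le \|h\|_\infty \le 1/q_0$, as required.

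The only subtle point is the correct handling of the zero set of $q$ when rewriting the right-hand side of~\eqref{theta-u}; assumption~(Q) is exactly what makes this rewriting produce an $L^\infty$ factor with a quantitative bound, and this is where the constant $q_0$ enters the conclusion. Everything else is a direct invocation of previously established results.
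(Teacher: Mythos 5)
Your argument is correct and is essentially the paper's own proof: the paper likewise rewrites $-q\,u^2$ as $\pm(q\,u)^2 h$ with $h$ equal to $1/q$ where $|q|\ge q_0$ and $0$ elsewhere, and then invokes Remark~\ref{uniform-bound}; your choice of $h=-1/q$ on $\{q\ne 0\}$ merely absorbs the sign and uses~(Q) at the step $\|h\|_\infty\le 1/q_0$ instead of at the identity $q=q^2h$. No gaps.
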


\begin{Proof}
Fix $u \in \Lambda_q$. Define $h \in L^\infty(\O)$ by
\begin{equation*}
h(x) := \begin{cases} {1}/{q(x)}  & \hbox{if $|q(x)|\ge q_0$,} \\[-1mm] 0 & \hbox{otherwise.}  \end{cases}
\end{equation*}
In view of assumption (Q), we have $q = q^2 \, h$ in $\O$, hence
 $\theta_u := -  {\cal L}_{qu} (q \, u^2) = - {\cal L}_{qu} ((q \, u)^2 \, h)$. Then $\|\theta_u\|_\infty \le \|h\|_\infty$, by Remark~\ref{uniform-bound}, and the conclusion readily follows.
\end{Proof}

\begin{remark}\label{noQ}
If assumption (Q) is satisfied, the map $u \in \Lambda_q \mapsto 2 \, \o \,  A \, \ovtheta$, which appears as the third summand in~\eqref{J-dec},
is bounded from below. Without assumption (Q), this need not be the case.

For instance, suppose that $q \in C(\O) \cap L^6(\O) \setminus\{0\}$ does not satisfy~(Q). Hence, either $\inf\, \{q(x) \, | \, q(x)>0\}= 0$ or $\, \sup\, \{q(x) \, | \, q(x)<0\}= 0$. In the former case, let $\{s_n\}$ be any unbounded increasing sequence. Up to a subsequence, the open set
$$\O_n^+ := \left\{x \in \O \, | \, s_{n} < 1/q(x) < s_{n+1} \right\}$$
is nonempty. Take $u_n \in C_0^\infty(\O_n^+)\setminus\{0\} \subset \Lambda_q$.
Define $h_n : \O \to \R$ by
\begin{equation*}
h_n(x) := \begin{cases} 1/{q(x)}  & \hbox{if $x \in \O_n^+$,} \\[-1mm] s_n & \hbox{otherwise;}  \end{cases}
\end{equation*}
clearly, $s_n \le h_n < s_{n+1}$ in $\O$.
Since $q \, u_n^2 = (q\, u_n)^2 h_n$ in $\O$, we have
$\theta_{u_n} := -{\cal L}_{qu_n} (q \, u_n^2) = -{\cal L}_{qu_n} ((q\, u_n)^2 h_n)$.  By Remark~\ref{uniform-bound}, we get $\theta_{u_n} \le -s_n$ in $\O$ and thus,
$\overline \theta_{u_n} \to - \infty$.
Likewise, in the case $\, \sup\, \{q(x) \, | \, q(x)<0\}= 0$, we find a sequence $\{u_n\}\subset \Lambda_q$ such that $\overline \theta_{u_n} \to \infty$.
Therefore, depending on the sign of~$\o$ and~$A$, the map $u \in \Lambda_q \mapsto 2 \, \o \,  A \, \ovtheta$ may be unbounded from below.
\end{remark}

\begin{lemma}\label{theta} For every $u \in \Lambda_q$,
$\displaystyle \Bigl| 2 \, \o \,  A \, \ovtheta \Bigr| \le  \int_\O \o^2 \, u^2 \, dx + A \, \oveta$.
\end{lemma}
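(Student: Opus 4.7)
The plan is to combine the integral identity \eqref{MIX1} with the variational identity obtained by testing equation \eqref{eta-u} against $\eta_u$, and then invoke a pointwise Young inequality.

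First, from \eqref{MIX1}, I would write
\begin{equation*}
2\,\o\, A\,\ovtheta = -2\,\o \int_\O q\, u^2\, \eta_u\, dx \, .
\end{equation*}
The algebraic trick is to split the integrand as $(\o\, u)\cdot(q\, u\, \eta_u)$ and apply $2|ab|\le a^2+b^2$ pointwise. This yields
\begin{equation*}
\Bigl|2\,\o\, A\,\ovtheta\Bigr| \le \int_\O \o^2\, u^2\, dx + \int_\O (q\, u)^2\, \eta_u^2\, dx \, .
\end{equation*}

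Next, since $\eta_u\in H^1(\O)$ satisfies \eqref{eta-u} with homogeneous Neumann conditions, testing the equation against $\eta_u$ (which is legitimate, and the boundary term vanishes) gives the energy identity
\begin{equation*}
\|\nabla \eta_u\|_2^2 + \int_\O (q\, u)^2\, \eta_u^2\, dx = \dfrac{A}{|\O|}\int_\O \eta_u\, dx = A\,\oveta \, .
\end{equation*}
In particular $\int_\O (q\, u)^2\, \eta_u^2\, dx \le A\,\oveta$ (which is consistent with Lemma~\ref{eta}(a)). Substituting this into the previous bound yields the desired inequality.

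The only mildly delicate point is spotting the correct way to factor the integrand $q\, u^2\, \eta_u$ so that Young's inequality produces exactly the two terms on the right-hand side: the $\o^2 u^2$ term comes out cleanly, and the leftover $(qu)^2\eta_u^2$ is precisely what the $\eta_u$-energy identity controls by $A\,\oveta$. Everything else is routine.
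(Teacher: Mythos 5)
Your proposal is correct and matches the paper's own proof step for step: both start from the identity \eqref{MIX1}, apply the pointwise Young inequality $2|ab|\le a^2+b^2$ with the factorization $(\o\,u)\cdot(q\,u\,\eta_u)$, and then control $\int_\O (q\,u)^2\,\eta_u^2\,dx$ by $A\,\oveta$ via the energy identity obtained by testing \eqref{eta-u} against $\eta_u$. Nothing to add.
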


\begin{proof}
Fix $u \in \Lambda_q$. By~\eqref{MIX1},
\begin{equation*}
\Bigl| 2 \, \o \,  A  \, \ovtheta \Bigr|  = \left|\int_\O 2 \, \o \, q \, u^2 \, \eta_u \, dx  \right| \le \int_\O \o^2 \, u^2 \, dx +  \int_\O (q \, u)^2 \, \eta_u^2 \, dx  \, .
\end{equation*}
Multiplying~\eqref{eta-u} by $\eta_u$ yields
\begin{equation*}
\|\nabla \eta_u\|_2^2 + \int_\O (q\, u)^2 \, \eta_u^2 \, dx = A \, \oveta \, .
\end{equation*}
The conclusion readily follows.
\end{proof}

\begin{lemma}\label{eta-theta}
Let $\{u_n\} \subset \Lambda_q$ be bounded.
\begin{itemize}
\item[{\rm (a)}]  Suppose that $\{\|q\, u_n\|_3\}$ is bounded away from $0$. Then: up to a subsequence, $\{\eta_{u_n}\}$ and $\{\theta_{u_n}\}$ converge in $H^1(\O)$.
\item[{\rm (b)}]  If $A \ne 0$ and  $\|q \, u_n\|_3 \to 0$, then $ A \, \overline \eta_{u_n} + 2 \, \o \,  A \, \overline \theta_{u_n}  \to \infty$.
\end{itemize}
\end{lemma}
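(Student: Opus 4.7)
For part (a), the plan is to combine Rellich compactness, Vitali's convergence theorem, and the continuity provided by Proposition~\ref{equation}. I would extract a subsequence (not relabelled) with $u_n \rightharpoonup u$ in $H^1_0(\O)$, $u_n \to u$ strongly in $L^p(\O)$ for every $p\in[1,6)$, and $u_n \to u$ a.e.\ in $\O$. The key observation is that the family $\{|qu_n|^3\}$ is equi-absolutely continuous: since $q\in L^6(\O)$, for every measurable $E\subset\O$ one has
\begin{equation*}
\int_E |q u_n|^3\,dx \le \Bigl(\int_E |q|^6\,dx\Bigr)^{1/2} \|u_n\|_6^3,
\end{equation*}
and the right-hand side tends to $0$ as $|E|\to 0$ uniformly in $n$. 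Vitali's theorem then upgrades the a.e.\ convergence to $q u_n \to q u$ in $L^3(\O)$, and the hypothesis $\|q u_n\|_3\ge c_0>0$ forces $\|q u\|_3\ge c_0$, hence $u\in\Lambda_q$. A H\"older estimate $\|q(u_n^2-u^2)\|_{6/5}\le \|q\|_6\|u_n+u\|_3\|u_n-u\|_3$ also yields $q u_n^2\to q u^2$ in $L^{6/5}(\O)$. Applying the continuous dependence of ${\cal L}_b(\rho)$ on $b$ and $\rho$ (Proposition~\ref{equation}) to the pairs $(qu_n,A/|\O|)$ and $(qu_n,qu_n^2)$ delivers the $H^1$-convergence of $\eta_{u_n}=(A/|\O|)\,{\cal L}_{qu_n}(1)$ to $\eta_u$ and of $\theta_{u_n}=-{\cal L}_{qu_n}(qu_n^2)$ to $\theta_u$.

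For part~(b), Lemma~\ref{eta}(c) already gives $A\,\overline{\eta}_{u_n}\to+\infty$ (positive, since $A\,\overline{\eta}_{u_n}\ge 0$ by Lemma~\ref{eta}(a)), so it only remains to prevent $2\o A\,\overline{\theta}_{u_n}$ from cancelling this divergence. The bound in Lemma~\ref{theta} is too coarse as stated, giving only $A\,\overline{\eta}_{u_n}+2\o A\,\overline{\theta}_{u_n}\ge -\o^2\|u_n\|_2^2$. I would refine its proof with a scaled Young inequality: starting from identity~\eqref{MIX1}, for any $\eps\in(0,1)$,
\begin{equation*}
|2\o A\,\overline{\theta}_{u_n}| = \Bigl|2\o\int_\O q u_n^2\,\eta_{u_n}\,dx\Bigr| \le \eps\int_\O (qu_n)^2\eta_{u_n}^2\,dx + \eps^{-1}\o^2\|u_n\|_2^2 \le \eps\,A\,\overline{\eta}_{u_n} + \eps^{-1}\o^2\|u_n\|_2^2,
\end{equation*}
where the last step uses the identity $\|\nabla\eta_{u_n}\|_2^2+\int_\O(qu_n)^2\eta_{u_n}^2\,dx=A\,\overline{\eta}_{u_n}$ obtained by testing~\eqref{eta-u} with $\eta_{u_n}$. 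Combining these bounds gives $A\,\overline{\eta}_{u_n}+2\o A\,\overline{\theta}_{u_n}\ge (1-\eps)\,A\,\overline{\eta}_{u_n}-\eps^{-1}\o^2\|u_n\|_2^2$, and boundedness of $\{u_n\}$ in $L^2(\O)$ together with Lemma~\ref{eta}(c) yields the conclusion.

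The main obstacle I anticipate is the first step of~(a): passing from strong $L^p$ convergence of $u_n$ (available for $p<6$ via Rellich) to strong $L^3$ convergence of $q u_n$, which is what the continuity of ${\cal L}_b$ in the coefficient $b$ requires. A purely H\"older-based bound on $\|q(u_n-u)\|_p$ only reaches $p<3$; closing the gap at $p=3$, where in principle concentration of $qu_n$ could obstruct, is exactly what the equi-absolute-continuity estimate and Vitali's theorem are designed to deliver.
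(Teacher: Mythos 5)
Your proof is correct, and it differs from the paper's in two instructive ways. In part~(a) the overall strategy is the same --- extract a subsequence, show $q\,u_n \to q\,u$ in $L^3(\O)$ and $q\,u_n^2 \to q\,u^2$ in $L^{6/5}(\O)$, then invoke the continuous dependence in Proposition~\ref{equation} --- but the paper simply asserts that a bounded sequence in $H^1_0(\O)$ converges, up to a subsequence, in $L^6(\O)$; since $6$ is the critical Sobolev exponent in dimension three, that embedding is not compact, so this step needs justification. Your Vitali/equi-integrability argument (or, equivalently, a truncation of $q$ into an $L^\infty$ part plus a small $L^6$ remainder) is exactly what is needed to deduce $q\,u_n \to q\,u$ in $L^3(\O)$ from the subcritical convergence that Rellich actually provides; in this respect your write-up is more careful than the paper's. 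In part~(b) you take a genuinely different route: the paper combines Lemma~\ref{eta}(b) with H\"older's inequality and Sobolev embeddings to obtain $\bigl|2\,\o\,A\,\overline\theta_{u_n}\bigr| \le N(u_n)\,|\overline\eta_{u_n}|$ with $N(u_n)\to 0$, and concludes from $A\,\overline\eta_{u_n} + 2\,\o\,A\,\overline\theta_{u_n} \ge \bigl(|A|-N(u_n)\bigr)\,|\overline\eta_{u_n}|$ together with Lemma~\ref{eta}(c); you instead apply Young's inequality with a parameter $\eps$ to~\eqref{MIX1} and absorb $\eps\int_\O (q\,u_n)^2\eta_{u_n}^2\,dx$ into $\eps\,A\,\overline\eta_{u_n}$ via the energy identity obtained by testing~\eqref{eta-u} with $\eta_{u_n}$ (the same identity the paper uses in Lemma~\ref{theta}). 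Your version is shorter, bypasses Lemma~\ref{eta}(b) entirely, and uses $\|q\,u_n\|_3 \to 0$ only through Lemma~\ref{eta}(c), while the paper's version makes the smallness of the perturbation quantitatively explicit in terms of $\|q\,u_n\|_3$. Both arguments are valid.
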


\begin{proof}
(a)\ Let $\{u_n\}\subset \Lambda_q$ be bounded.
Up to a subsequence, $\{u_n\}$ has in $L^6(\O)$ a limit $u$. Since $q \, u_n \to q\, u$ in $L^3(\O)$ and $\{\|q\, u_n\|_3\}$ is bounded away from $0$, we deduce $q \, u \ne 0$; moreover, $q \, u_n^2 \to q\, u^2$ in $L^{6/5}(\O)$.
Recall that $\eta_{u_n}:= {\cal L}_{qu_n} ({A}/{|\O|})$ and $\theta_{u_n}:= - {\cal L}_{qu_n} (q \, u_n^2)$.
Thus, by Proposition~\ref{equation}, $\eta_{u_n}$ and $\theta_{u_n}$ converge in $H^1(\O)$ to
 ${\cal L}_{q u} ({A}/{|\O|})$ and $-{\cal L}_{qu} (q \, u^2)$, respectively.
\\
(b)\ Preliminarily, fix $u \in \Lambda_q$ and note that,
by Lemma~\ref{eta}(b),
\begin{equation*}
\|\eta_u\|^2 = \|\nabla \eta_u\|_2^2 + |\oveta|^2 \le
\Bigl(\gamma^2 \, \|q \, u\|_3^4 + 1\Bigr)\, |\oveta|^2 \, .
\end{equation*}
Thus, by~\eqref{MIX1}, H\"older's inequality, and Sobolev's embedding theorem,
\begin{align*}
\Bigl| \o \, A \, \ovtheta \Bigr|  & = \Bigl| \int_\O \o \, q \, u^2 \, \eta_u \, dx \Bigr| \le |\o| \, \|q \, u\|_3 \, \|u\|_3 \, \|\eta_u\|_3 \\ &
 \le \sigma_3 \, \tau_3 \, |\o| \, \|q \, u\|_3 \, \|\nabla u\|_2 \, \|\eta_u\| \\ &
 \le \sigma_3 \, \tau_3 \,  |\o| \, \|q \, u\|_3 \,  \|\nabla u\|_2 \, \Bigl(\gamma^2 \, \|q \, u\|_3^4 + 1\Bigr)^{1/2}\, |\oveta| \, . \end{align*}
Therefore,
\begin{equation}\label{AT(u)-bis}
A \, \oveta + 2 \, \o \, A \, \ovtheta   \ge
|A| \, |\oveta| - \Bigl| 2 \, \o \, A \, \ovtheta \Bigr|
\ge \bigl(|A| - N(u)\bigr) \, |\oveta| \, ,
\end{equation}
with
\begin{equation*}
N(u):= 2 \, \sigma_3 \, \tau_3 \,  |\o| \, \|q \, u\|_3 \,  \|\nabla u\|_2 \, \Bigl(\gamma^2 \, \|q \, u\|_3^4 + 1\Bigr)^{1/2} \, .
\end{equation*}
Now assume that $\{u_n\} \subset \Lambda_q$ is bounded and $\|q \, u_n\|_3 \to 0$. Then $N(u_n) \to 0$, and the conclusion follows from~\eqref{AT(u)-bis} and Lemma~\ref{eta}(c).
\end{proof}

\subsection{Properties of $\widetilde J$}

\begin{lemma}\label{bound-1}
There exist $C_1 \in \R$, $C_2 \in (0,\infty)$, and $C_3 \in [0,\infty)$, which depend on $\O$, $\o$, and the norms $\|q\|_6$ and $\|\al\|_{1/2}$, such that
\begin{equation}\label{lowerJ}
   \widetilde J(u)   \ge C_1 \, \|\nabla u\|_2^2 + \int_\O \bigl(m^2 - \o^2\bigr) u^2 \, dx   - 2\, \kappa \, |A| \, \|\al\|_{1/2}
\end{equation}
and
\begin{equation}\label{upperJ}
\widetilde J(u)   \le \bigl(C_2 + C_3 \, \|\theta_u\|\bigr) \, \|\nabla u\|_2^2 + 2 \, \kappa \, |A|  \, \|\al\|_{1/2}
\end{equation}
for every $u\in \Lambda_q$.
\end{lemma}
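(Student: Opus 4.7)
The plan is to split $\widetilde J(u)$ into its seven summands and estimate each one separately. The key structural observation is that the last summand, $-\int_\O \o^2 q u^2 \theta_u \, dx$, is nonnegative and can be discarded for the lower bound, but must be kept and carefully estimated (producing the coefficient $C_3 \|\theta_u\|$) for the upper bound. This sign asymmetry is what gives the two inequalities their different forms.

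First, I would multiply \eqref{theta-u} by $\theta_u$ and integrate by parts to obtain
\[
\|\nabla \theta_u\|_2^2 + \int_\O (q u)^2 \theta_u^2 \, dx = -\int_\O q u^2 \theta_u \, dx,
\]
whence $-\int_\O \o^2 q u^2 \theta_u \, dx = \o^2 \|\nabla \theta_u\|_2^2 + \o^2 \int_\O (qu)^2 \theta_u^2 \, dx \ge 0$. Next, I would estimate the remaining summands using \eqref{chi-infty} and \eqref{xi}, which give $\|\chi\|_\infty, \|\xi_u\|_\infty \le \kappa \|\al\|_{1/2}$ (and hence $|\ovxi| \le \kappa \|\al\|_{1/2}$), together with H\"older's inequality and Sobolev's embedding, which yield $\|qu\|_2 \le \sigma_3 \|q\|_6 \|\nabla u\|_2$ and therefore $\int_\O |q| u^2 \, dx \le \|qu\|_2 \|u\|_2 \le \sigma_2 \sigma_3 \|q\|_6 \|\nabla u\|_2^2$. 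With these inputs, the third summand is controlled by $4|\o| \kappa \|\al\|_{1/2} \sigma_2 \sigma_3 \|q\|_6 \|\nabla u\|_2^2$; the fourth by $2 \kappa |A| \|\al\|_{1/2}$; and each of the fifth and sixth by $\kappa^2 \|\al\|_{1/2}^2 \sigma_3^2 \|q\|_6^2 \|\nabla u\|_2^2$.

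Collecting these estimates and dropping the nonnegative seventh summand yields \eqref{lowerJ}, with $C_1$ equal to $1$ minus the sum of the $\|\nabla u\|_2^2$-coefficients collected above (a real number depending on $\O, \o, \|q\|_6, \|\al\|_{1/2}$ and possibly negative). For the upper bound I would additionally use $\int_\O (m^2 - \o^2) u^2 \, dx \le |m^2 - \o^2| \sigma_2^2 \|\nabla u\|_2^2$, and treat the seventh summand via
\[
\Bigl|\int_\O \o^2 q u^2 \theta_u \, dx \Bigr| \le \o^2 \|qu\|_2 \|u\|_3 \|\theta_u\|_6 \le \o^2 \sigma_3^2 \tau_6 \|q\|_6 \|\nabla u\|_2^2 \, \|\theta_u\|,
\]
which supplies $C_3 := \o^2 \sigma_3^2 \tau_6 \|q\|_6$, while the remaining contributions bundle into a positive constant $C_2$. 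The only subtlety is recognizing that $\|\theta_u\|$ genuinely cannot be dropped from the upper bound in the absence of assumption~(Q), as Remark~\ref{noQ} makes clear; beyond that, the argument is a mechanical application of H\"older's and Sobolev's inequalities, and I anticipate no real obstacle.
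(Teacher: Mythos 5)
Your proposal is correct and follows essentially the same route as the paper: decompose $\widetilde J$ into its summands, use $\|\chi\|_\infty,\|\xi_u\|_\infty\le\kappa\|\al\|_{1/2}$ together with H\"older and Sobolev for the sign-indefinite terms, exploit the nonnegativity of $-\int_\O \o^2\, q\, u^2\,\theta_u\,dx$ (obtained by testing \eqref{theta-u} with $\theta_u$) for the lower bound, and estimate that same term via $\|\theta_u\|_6\le\tau_6\|\theta_u\|$ to produce $C_3=\o^2\sigma_3^2\tau_6\|q\|_6$ for the upper bound. The only deviations are cosmetic: you use the H\"older pairing $\|qu\|_2\|u\|_2$ (constant $\sigma_2\sigma_3$) where the paper uses $\|q\|_6\|u\|_{12/5}^2$ (constant $\sigma_{12/5}^2$), and you absorb the sixth summand into $C_1$ by absolute value where the paper drops it by sign — both yield admissible constants of the same form.
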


\begin{proof}
Fix $u\in \Lambda_q$ and recall that
\begin{equation*}
\begin{split}
\widetilde J(u)
& = \|\nabla u\|_2^2 + \int_\O \bigl(m^2 - \o^2) \, u^2 \, dx  -  \int_\O 2 \, \o \, q  \, u^2 \, (\chi+\xi_u) \, dx  +  {} \\[2mm]
&
+ 2 \, A \, \ovxi - \int_\O (q\, u)^2 \, \chi^2 \, dx   - \int_\O (q \, u)^2 \, \chi \, \xi_u \, dx  - \int_\O \o^2 \, q \, u^2 \, \theta_u \, dx \, .
\end{split}
\end{equation*}
By H\"older's inequality, Sobolev's embedding theorem, \eqref{chi-infty}, and \eqref{xi},
\begin{equation}\label{bound-on-P1}
\begin{split}
\left| \int_\O  q \, u^2 \, (\chi + \xi_u) \, dx \right|  & \le
2 \, \|\chi\|_\infty \, \|q\|_6  \, \|u\|_{12/5}^2 \\ & \le 2 \, \kappa \, \sigma_{12/5}^2 \, \|\al\|_{1/2} \, \|q\|_6 \, \|\nabla u\|_2^2 \, ,
\end{split}
\end{equation}
\begin{equation}\label{bound-on-P2}
\int_\O (q\, u)^2 \, \chi^2 \, dx  \le
\|\chi\|_\infty^2 \|q\|_6^2  \, \|u\|_3^2  \le
 \kappa^2 \, \sigma_3^2 \, \|\al\|_{1/2}^2 \, \|q\|_6^2  \, \|\nabla u\|_2^2 \, ,
\end{equation}
and
\begin{equation}\label{co2}
|\ovxi|
\le \kappa \, \|\al\|_{1/2} \ .
\end{equation}
Multiplying~\eqref{xi-u} and~\eqref{theta-u} by $\xi_u$ and $\theta_u$, respectively, gives
\begin{equation}\label{positive}
- \int_\O (q \, u)^2 \, \chi \, \xi_u \, dx \ge 0 \, , \quad - \int_\O \o^2 \, q \, u^2 \, \theta_u \, dx \ge 0 \, .
\end{equation}
Taking~\eqref{bound-on-P1}-\eqref{positive} into account yields~\eqref{lowerJ}, with
\begin{equation}\label{C1}
C_1 := 1 - 4 \, |\o| \, \kappa \, \sigma_{12/5}^2 \, \|\al\|_{1/2} \, \|q\|_6 - \kappa^2 \, \sigma_3^2 \, \|\al\|_{1/2}^2 \, \|q\|_6^2 \, .
\end{equation}
To prove~\eqref{upperJ}, in addition to~\eqref{bound-on-P1}-\eqref{co2} observe that \begin{equation*}\label{up-1}
\left| \int_\O (q \, u)^2 \, \chi \, \xi_u \, dx \right| \le
\|\chi\|_\infty^2 \, \|q\|_6^2 \, \|u\|_3^2 \le
\kappa^2 \, \sigma_3^2 \, \|\al\|_{1/2}^2 \, \|q\|_6^2  \, \|\nabla u\|_2^2
\end{equation*}
and
\begin{equation*}\label{up-2}
\left| \int_\O q \, u^2 \, \theta_u \, dx \right| \le \|q\|_6 \, \|u\|_3^2 \, \|\theta_u\|_6 \le
\sigma_3^2 \,   _6 \, \|q\|_6 \, \|\nabla u\|_2^2 \,  \|\theta_u\| \, .
\end{equation*}
Thus, \eqref{upperJ} follows with
\begin{equation*}
C_2:= 1 + \bigl|m^2 - \o^2| \, \sigma_2^2
 + 4 \, |\o| \, \kappa \, \sigma_{12/5}^2 \, \|\al\|_{1/2}  \, \|q\|_6 +  2\, \kappa^2 \, \sigma_3^2 \, \|\al\|_{1/2}^2 \, \|q\|_6^2
\end{equation*}
and
$C_3:= \o^2 \, \sigma_3^2 \,  \tau_6 \, \|q\|_6$.
\end{proof}

\section{Properties of $J$}\label{prop-of-J}

Throughout this section, we will assume $A\ne 0$.
For ease of discussion, we will refer to
Case~1, if $|\o| \le |m|$ and (Q) is satisfied,  and to
Case~2, if $|\o| \le |m|/\sqrt 2$.
In either case, we will assume that $\|\al\|_{1/2} \, \|q\|_6$ is so small that the constant $C_1$, as defined in~\eqref{C1}, is strictly positive.

\begin{proposition}\label{coercive1}
The functional $J$ is bounded from below and coercive in $\Lambda_q$.
\end{proposition}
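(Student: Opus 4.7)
The plan is to combine the lower bound~\eqref{lowerJ} for $\widetilde J$ with bounds on the remaining two terms $A\,\oveta$ and $2\omega A\,\ovtheta$ in the decomposition~\eqref{J-dec}, treating Case~1 and Case~2 separately. In both cases, the hypothesis $\|\al\|_{1/2}\,\|q\|_6\ll 1$ guarantees $C_1>0$, so the gradient term $C_1\|\nabla u\|_2^2$ from~\eqref{lowerJ} will provide coercivity once the other contributions are shown to be bounded below by something controlled by a constant plus the mass term $\int_\O(m^2-\omega^2)\,u^2\,dx$ (possibly with a harmless loss of a factor of $\omega^2$).

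For both cases, Lemma~\ref{eta}(a) yields $A\,\oveta\ge 0$, so only the $\theta_u$ term needs care. In Case~1, assumption~(Q) is in force, and Lemma~\ref{theta-bounded} gives $\|\theta_u\|_\infty\le 1/q_0$, whence
\[
\bigl|2\,\o\,A\,\ovtheta\bigr|\le \frac{2\,|\o|\,|A|}{q_0},
\]
a bound independent of $u$. Combined with~\eqref{lowerJ} and $m^2-\omega^2\ge 0$ (since $|\o|\le|m|$), I obtain
\[
J(u)\;\ge\; C_1\,\|\nabla u\|_2^2\;-\;2\,\kappa\,|A|\,\|\al\|_{1/2}\;-\;\frac{2\,|\o|\,|A|}{q_0},
\]
which is simultaneously bounded below and coercive on $\Lambda_q$.

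In Case~2, where (Q) may fail, I use Lemma~\ref{theta} to absorb the $\theta_u$ term into the mass term and into $A\,\oveta$:
\[
A\,\oveta+2\,\o\,A\,\ovtheta\;\ge\;A\,\oveta-\bigl|2\,\o\,A\,\ovtheta\bigr|\;\ge\;-\int_\O \o^2\,u^2\,dx,
\]
after cancelling $A\,\oveta$ (which is nonnegative) on both sides. Plugging this into~\eqref{J-dec} together with~\eqref{lowerJ} yields
\[
J(u)\;\ge\;C_1\,\|\nabla u\|_2^2+\int_\O\bigl(m^2-2\,\o^2\bigr)u^2\,dx-2\,\kappa\,|A|\,\|\al\|_{1/2},
\]
and the assumption $|\o|\le|m|/\sqrt 2$ ensures $m^2-2\,\o^2\ge 0$, so again $J(u)\ge C_1\|\nabla u\|_2^2-2\kappa|A|\|\al\|_{1/2}$.

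The main obstacle is precisely the $2\,\o\,A\,\ovtheta$ term, which Remark~\ref{noQ} warns is unbounded below in general; the resolution is that the two assumptions, (Q) in Case~1 and the stricter range $|\o|\le|m|/\sqrt 2$ in Case~2, are exactly tailored to control it in qualitatively different ways (a uniform $L^\infty$ bound in the first, a trade-off against the mass term in the second). Everything else is a direct substitution into the decomposition~\eqref{J-dec}.
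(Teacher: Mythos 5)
Your proposal is correct and follows essentially the same route as the paper: the decomposition~\eqref{J-dec} plus the lower bound~\eqref{lowerJ}, with Lemma~\ref{eta}(a) and Lemma~\ref{theta-bounded} handling the $\theta_u$ term uniformly in Case~1, and Lemma~\ref{theta} trading it against the mass term in Case~2 under $|\o|\le|m|/\sqrt 2$. The only cosmetic difference is that you discard the nonnegative mass term at the end, whereas the paper keeps it in the displayed estimates.
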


\begin{proof}
Fix $u \in \Lambda_q$. In Case 1, Lemma~\ref{theta-bounded} applies and implies
\begin{equation*}
A \, \oveta + 2 \, \o \,  A \, \ovtheta \ge - \dfrac{2 \, |A| \, |\o|}{q_0}\, ,
\end{equation*}
in view of  Lemma~\ref{eta}(a).
Therefore, \eqref{J-dec} and \eqref{lowerJ} yield
\begin{equation*}
J(u)
\ge
C_{1} \, \|\nabla u\|_2^2 + \int_\O \bigl(m^2 - \o^2\bigr) u^2 \, dx   - 2\, \kappa \, |A| \, \|\al\|_{1/2}
- \dfrac{2 \, |A| \, |\o|}{q_0}  \, .
\end{equation*}
In Case 2, Lemma~\ref{theta} implies
\[ A \, \oveta + 2 \, \o \,  A \, \ovtheta \ge - \int_\O \o^2 \, u^2 \, dx \, .\]
Therefore, \eqref{J-dec} and~\eqref{lowerJ} yield
\begin{equation*}
\begin{split}
J(u)
& \ge
C_{1} \, \|\nabla u\|_2^2 + \int_\O \bigl(m^2 - \o^2\bigr) u^2 \, dx   - 2\, \kappa \, |A| \, \|\al\|_{1/2}
- \int_\O \o^2 \, u^2 \, dx \\
& =
C_{1} \, \|\nabla u\|_2^2 + \int_\O \bigl(m^2 - 2 \, \o^2\bigr) u^2 \, dx   - 2\, \kappa \, |A| \, \|\al\|_{1/2} \, .
\end{split}
\end{equation*}
With $C_{1} \in (0,\infty)$, the conclusions readily follow in both cases.
\end{proof}

\begin{proposition}\label{boundary}~{}
\begin{itemize}
\item[{\rm (a)}] Every sequence $\{u_n\}\subset \Lambda_q$ such that $\|q \, u_n\|_3 \to 0$ has a subsequence $\{u_{k_n}\}$ such that $J(u_{k_n}) \to \infty$.
\item[{\rm (b)}] $J$ has complete sublevels.
\end{itemize}
\end{proposition}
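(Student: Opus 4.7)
The plan is to prove (a) by a dichotomy on whether $\{u_n\}$ is bounded in $H^1_0(\O)$, and then derive (b) as an easy corollary of (a) combined with Proposition~\ref{lambda-q}(b) and the continuity of $J$.

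For (a), let $\{u_n\} \subset \Lambda_q$ satisfy $\|q u_n\|_3 \to 0$. If $\{u_n\}$ is unbounded in $H^1_0(\O)$, I would extract a subsequence $\{u_{k_n}\}$ with $\|\nabla u_{k_n}\|_2 \to \infty$; the coercivity of $J$ established in Proposition~\ref{coercive1} then forces $J(u_{k_n}) \to \infty$. If instead $\{u_n\}$ is bounded, Lemma~\ref{eta-theta}(b) (whose hypotheses are now satisfied, since $A \ne 0$ throughout Section~\ref{prop-of-J}) gives directly $A \, \overline{\eta}_{u_n} + 2\,\o\,A\,\overline{\theta}_{u_n} \to \infty$. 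At the same time, the lower bound \eqref{lowerJ}, combined with the nonnegativity of $m^2 - \o^2$ (which holds in both Case~1 and Case~2) and the boundedness of $\{\|\nabla u_n\|_2\}$, shows that $\widetilde J(u_n)$ is bounded from below. Plugging these two facts into the decomposition \eqref{J-dec} yields $J(u_n) \to \infty$, and one may take $k_n = n$.

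For (b), let $\{u_n\} \subset \Lambda_q$ be a Cauchy sequence in $H^1_0(\O)$ lying in a sublevel $\{J \le c\}$. Then $u_n \to u$ in $H^1_0(\O)$ for some $u$; the task is to show $u \in \Lambda_q$ and $J(u) \le c$. If $u \in \partial\Lambda_q$, then $\dist(u_n,\partial\Lambda_q) \le \|u_n - u\| \to 0$, so Proposition~\ref{lambda-q}(b) gives $\|q u_n\|_3 \to 0$, and by part (a) some subsequence of $\{J(u_n)\}$ tends to $\infty$, contradicting $J(u_n) \le c$. Hence $u \in \Lambda_q$, and the continuity of $J$ on $\Lambda_q$ (Proposition~\ref{CF}(b)) yields $J(u) \le c$.

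The main obstacle is the bounded case of (a): one must ensure that the divergence to $+\infty$ of $A\,\overline{\eta}_{u_n} + 2\o A\,\overline{\theta}_{u_n}$ is not canceled by the remaining terms gathered in $\widetilde J(u_n)$. This is precisely where the lower bound \eqref{lowerJ} and the assumption $|\o| \le |m|$ intervene, to keep $\widetilde J$ controlled from below on bounded subsets of $\Lambda_q$. Everything else in the argument is essentially a packaging of results already in the paper.
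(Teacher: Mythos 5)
Your proof is correct and follows essentially the same route as the paper: coercivity (Proposition~\ref{coercive1}) for the unbounded case, Lemma~\ref{eta-theta}(b) together with the lower bound~\eqref{lowerJ} and the decomposition~\eqref{J-dec} for the bounded case, and Proposition~\ref{lambda-q}(b) for part~(b). The only (harmless) difference is that you treat Cases~1 and~2 uniformly via the boundedness dichotomy, whereas the paper disposes of Case~1 more directly through Lemmas~\ref{eta}(c) and~\ref{theta-bounded}, without needing boundedness of the sequence there.
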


\begin{proof}
(a)\ Let $\{u_n\}\subset \Lambda_q$ and assume $\|q\, u_n\|_3 \to 0$.\\
In Case 1, note that $A\, \overline \eta_{u_n} \to \infty$, by Lemma~\ref{eta}(c),
whereas $\{\overline \theta_{u_n}\}$ and $\{\widetilde J(u_n)\}$  are bounded from below, by  Lemma~\ref{theta-bounded} and~\eqref{lowerJ}.
Since $J(u_n) = \widetilde J(u_n) +  A \, \overline \eta_{u_n} + 2 \, \o \,  A \, \overline \theta_{u_n}$, we deduce $J(u_n) \to \infty$.  \\
In Case 2, we consider two possibilities.
If $\{\|\nabla u_n\|_2\}$ is unbounded, there exists a subsequence $\{u_{k_n}\}$ such that $\|\nabla u_{k_n}\|_2 \to \infty$; thus, \hbox{$J(u_{k_n}) \to \infty$}, for $J$ is coercive by Proposition~\ref{coercive1}.
If $\{\|\nabla u_n\|_2\}$ is bounded, then
$ A \, \overline \eta_{u_n} + 2 \, \o \,  A \, \overline \theta_{u_n}  \to \infty$, by Lemma~\ref{eta-theta}(b), whereas $\{\widetilde J(u_n)\}$ is bounded from below, by~\eqref{lowerJ}. Since $J(u_n) = \widetilde J(u_n) +  A \, \overline \eta_{u_n} + 2 \, \o \,  A \, \overline \theta_{u_n}$, we deduce $J(u_n) \to \infty$.
\\
(b)\ Suppose that $\{u_n\}\subset \Lambda_q$, $J(u_n)\le c$, for some $c \in \R$, and $u_n \to u$ in $H^1_0(\O)$.
By Part~(a), the sequence $\{\|q \, u_n\|_3\}$ is bounded away from $0$ and thus, by Proposition~\ref{lambda-q}(b), $u \in \Lambda_q$.
\end{proof}

\begin{proposition}\label{if-and-only-if}
For any $\{u_n\} \subset \Lambda_q$, the sequence $\{J(u_n)\}$ is unbounded if, and only if, either $\{u_n\}$ is unbounded or $\{\|q\, u_n\|_3\}$ is not bounded away from $0$.
\end{proposition}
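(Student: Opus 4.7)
The plan is to handle the two implications separately. The forward direction is essentially a restatement of earlier results; the reverse direction requires combining the compactness supplied by Lemma~\ref{eta-theta}(a) with the upper estimate~\eqref{upperJ}.

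For the ``if'' direction, suppose one of the two alternatives holds. If $\{u_n\}$ is unbounded in $H^1_0(\O)$, I would pass to a subsequence $\{u_{k_n}\}$ with $\|\nabla u_{k_n}\|_2 \to \infty$ and invoke the coercivity of $J$ (Proposition~\ref{coercive1}) to conclude $J(u_{k_n}) \to \infty$. If instead $\{\|q\,u_n\|_3\}$ is not bounded away from $0$, I would extract a subsequence with $\|q\,u_{k_n}\|_3 \to 0$ and apply Proposition~\ref{boundary}(a) to obtain a further subsequence along which $J \to \infty$. In either case $\{J(u_n)\}$ is unbounded.

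For the ``only if'' direction, I would assume $\{u_n\}$ is bounded in $H^1_0(\O)$ and $\inf_n \|q\,u_n\|_3 > 0$, and show $\{J(u_n)\}$ is bounded. The lower bound is free from Proposition~\ref{coercive1}. For the upper bound I would argue by contradiction: suppose $J(u_{k_n}) \to \infty$ along some subsequence. Since both assumptions pass to any subsequence, Lemma~\ref{eta-theta}(a) produces a further subsequence (still denoted $\{u_{k_n}\}$) on which $\eta_{u_{k_n}}$ and $\theta_{u_{k_n}}$ converge in $H^1(\O)$; in particular the sequences $\{\overline\eta_{u_{k_n}}\}$, $\{\overline\theta_{u_{k_n}}\}$, and $\{\|\theta_{u_{k_n}}\|\}$ are bounded. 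Inserting these bounds into~\eqref{upperJ} yields a uniform upper bound on $\widetilde J(u_{k_n})$, and the decomposition~\eqref{J-dec} then bounds $J(u_{k_n})$ from above, contradicting $J(u_{k_n}) \to \infty$.

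The crux of the argument is this last step. The estimate~\eqref{upperJ} carries a $\|\theta_u\|$-dependent coefficient in front of $\|\nabla u\|_2^2$, and because no uniform bound on $\|\theta_u\|$ is available in general (cf.\ Remark~\ref{noQ}), one cannot simply estimate $\widetilde J$ at $u_n$ term by term. Instead, the proof must route through the compactness of the maps $u \mapsto \eta_u$ and $u \mapsto \theta_u$ furnished by Lemma~\ref{eta-theta}(a), which kicks in precisely under the two standing hypotheses that $\{u_n\}$ is bounded and $\{\|q\,u_n\|_3\}$ is bounded away from $0$.
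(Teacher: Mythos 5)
Your proposal is correct and follows essentially the same route as the paper: the ``if'' direction from Proposition~\ref{coercive1} and Proposition~\ref{boundary}(a), and the ``only if'' direction by contradiction via Lemma~\ref{eta-theta}(a), the bound~\eqref{upperJ}, and the decomposition~\eqref{J-dec}. If anything, you are slightly more explicit than the paper in noting that the convergence of $\{\theta_{u_{k_n}}\}$ in $H^1(\O)$ is needed to control the $\|\theta_u\|$-dependent coefficient in~\eqref{upperJ}, not just the boundedness of the averages.
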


\begin{proof}
The ``if'' part of the statement easily follows from Proposition~\ref{coercive1} and Proposition~\ref{boundary}. We will prove the ``only if'' part by way of contradiction.
Suppose that there exists a bounded sequence $\{u_n\}\subset \Lambda_q$ such that $\{\|q \, u_n\|_3\}$ is bounded away from $0$ and {$J(u_n)\to \infty$}. \\
Up to a subsequence, $\{\eta_{u_n}\}$ and $\{\theta_{u_n}\}$ converge in $H^1(\O)$, in view of Lemma~\ref{eta-theta}(a). This clearly implies that
$\{\overline{\eta}_{u_n}\}$ and $\{\overline{\theta}_{u_n}\}$ are bounded and thus, $\{\widetilde J(u_n)\}$ is bounded from above, by~\eqref{upperJ}.
Since
$J(u_n)  = \widetilde J(u_n) +  A \, \overline \eta_{u_n} + 2 \, \o \,  A \, \overline \theta_{u_n}$,
we deduce that $\{J(u_n)\}$ is bounded from above, a contradiction.
\end{proof}

\smallskip
\begin{proposition}\label{P-S}
The functional $J$ satisfies the Palais-Smale condition in $\Lambda_q$.
\end{proposition}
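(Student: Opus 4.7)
The plan is to show that every Palais-Smale sequence $\{u_n\} \subset \Lambda_q$ admits a subsequence converging strongly in $H^1_0(\O)$. First, coercivity (Proposition~\ref{coercive1}) gives that $\{u_n\}$ is bounded in $H^1_0(\O)$; since the PS hypothesis also gives that $\{J(u_n)\}$ is bounded, Proposition~\ref{if-and-only-if} yields that $\{\|q\,u_n\|_3\}$ is bounded away from $0$. Passing to a subsequence, $u_n \rightharpoonup u$ in $H^1_0(\O)$, strongly in $L^p(\O)$ for every $p \in [1,6)$, and a.e.\ in $\O$. An $\eps$-approximation of $q \in L^6(\O)$ by bounded functions $q_\eps$, combined with the splitting $q\,(u_n - u) = (q - q_\eps)(u_n - u) + q_\eps\,(u_n - u)$, yields $q\,u_n \to q\,u$ strongly in $L^3(\O)$; hence $q\,u \ne 0$, so $u \in \Lambda_q$. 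Then Lemma~\ref{eta-theta}(a) gives $\eta_{u_n} \to \eta_u$ and $\theta_{u_n} \to \theta_u$ in $H^1(\O)$, up to a further subsequence; the continuous dependence in Proposition~\ref{equation} applied to $\xi_{u_n} = -{\cal L}_{q\,u_n}((q\,u_n)^2 \chi)$ gives $\xi_{u_n} \to \xi_u$ in $H^1(\O)$. By~\eqref{decomp}, $\p_{u_n} \to \p_u$ strongly in $H^1(\O)$, and thus in $L^6(\O)$.

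The core step is to evaluate $J'(u_n) - J'(u)$ at $u_n - u$. Since $J'(u_n) \to 0$ and $u_n - u \rightharpoonup 0$ in $H^1_0(\O)$, Proposition~\ref{CF}(b) and the expression for $F'_u$ give
$$\tfrac{1}{2}\langle J'(u_n) - J'(u), u_n - u\rangle = \|\nabla(u_n - u)\|_2^2 + m^2 \, \|u_n - u\|_2^2 - R_n = o(1),$$
with $R_n := \int_\O \bigl[(\o + q\,(\p_{u_n}+\chi))^2 \, u_n - (\o + q\,(\p_u+\chi))^2 \, u\bigr]\,(u_n - u)\,dx$. Using $A^2 - B^2 = (A-B)(A+B)$ with $A = \o + q\,(\p_{u_n} + \chi)$ and $B = \o + q\,(\p_u + \chi)$, we split $R_n = R_n' + R_n''$ with
$$R_n' = \int_\O \bigl[2\o + q\,(\p_{u_n} + \p_u + 2\chi)\bigr]\,q\,(\p_{u_n}-\p_u)\,u_n\,(u_n-u)\,dx,$$
$$R_n'' = \int_\O \bigl(\o + q\,(\p_u+\chi)\bigr)^2 (u_n-u)^2\,dx.$$
H\"older's inequality (for instance with exponents $3,3,6,6$), together with $\|q\,(\p_{u_n}-\p_u)\|_3 \le \|q\|_6\,\|\p_{u_n}-\p_u\|_6 \to 0$ and the boundedness of the remaining factors, gives $R_n' \to 0$. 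For $R_n''$, expanding the square, the $\o^2$ and $2\o\,q(\p_u+\chi)$ contributions vanish in the limit thanks to $u_n \to u$ in $L^2(\O)$; the remaining piece equals $\|q\,(\p_u+\chi)(u_n-u)\|_2^2$, which tends to $0$ by an $\eps$-approximation of $q\,(\p_u+\chi) \in L^3(\O)$ by $L^\infty$ functions, exploiting the $L^6$-boundedness and $L^2$-convergence of $\{u_n - u\}$. Thus $R_n \to 0$, and $\|\nabla(u_n - u)\|_2 \to 0$.

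The main obstacle is the nonlocal dependence of $\p_u$ on $u$: everything rests on the strong $H^1$ convergence $\p_{u_n} \to \p_u$, which in turn requires the limit $u$ to lie in $\Lambda_q$. This last point is precisely what Proposition~\ref{if-and-only-if} guarantees via the lower bound on $\{\|q\,u_n\|_3\}$. The $\theta_u$-dependent terms, which are new compared to~\cite{lp1}, add no difficulty at this stage, because Lemma~\ref{eta-theta}(a) delivers the strong $H^1$ convergence of $\theta_{u_n}$ on the same footing as $\eta_{u_n}$.
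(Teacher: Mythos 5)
Your proof is correct, but it follows a genuinely different route from the paper in the final compactness step. The opening moves coincide: coercivity gives boundedness of the Palais--Smale sequence, boundedness of $\{J(u_n)\}$ forces $\{\|q\,u_n\|_3\}$ to stay away from $0$, and Lemma~\ref{eta-theta}(a) (together with~\eqref{xi}) controls $\eta_{u_n}$, $\theta_{u_n}$, $\xi_{u_n}$. From there the paper does \emph{not} test the derivative against $u_n-u$; instead it rewrites $J'(u_n)\to 0$ as an equation for $\Delta u_n$ (see~\eqref{PS}), checks that the nonlinear term $q^2(\eta_{u_n}+\xi_{u_n}+\o\,\theta_{u_n}+\chi)^2 u_n$ is bounded in $L^{6/5}(\O)\hookrightarrow H^{-1}(\O)$ --- for which only $L^6$-\emph{boundedness} of the potentials is needed, not their convergence --- and then invokes compactness of the inverse Laplacian, deferring the verification $u\in\Lambda_q$ to Proposition~\ref{boundary}(b). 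You instead identify the weak limit up front (proving $q\,u_n\to q\,u$ in $L^3(\O)$ by truncating $q$, hence $u\in\Lambda_q$ directly), upgrade to strong $H^1$ convergence $\p_{u_n}\to\p_u$, and run the classical computation on $\langle J'(u_n)-J'(u),u_n-u\rangle$ with the splitting $R_n=R_n'+R_n''$. Your Hölder exponents check out ($\tfrac13+\tfrac13+\tfrac16+\tfrac16=1$ for $R_n'$, and the $\eps$-truncation of $q\,(\p_u+\chi)\in L^3(\O)$ for $R_n''$ is exactly what is needed since $u_n-u$ converges in $L^2(\O)$ but is only bounded in $L^6(\O)$). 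What each approach buys: the paper's argument is shorter and avoids any discussion of the limits of $\eta_{u_n},\theta_{u_n},\xi_{u_n}$, at the price of leaning on a somewhat terse appeal to ``compactness of the inverse Laplace operator'' on an $H^{-1}$-bounded right-hand side; your argument is longer but makes the compactness mechanism completely explicit and self-contained, and in particular your careful $L^\infty$-approximation of $q$ is slightly more scrupulous than the paper's passing claim (in Lemma~\ref{eta-theta}(a)) that a bounded sequence in $H^1_0(\O)$ has a strongly convergent subsequence in $L^6(\O)$.
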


\begin{proof}
Suppose that $\{u_n\} \subset \Lambda_q$ is a Palais-Smale sequence, that is, $\{J(u_n)\}$ is bounded and $J'(u_n) \to 0$; we have to show that, up to a subsequence, $\{u_n\}$ converges in $\Lambda_q$. \\
Since $J$ is coercive, $\{u_n\}$ is bounded in $H^1_0(\O)$; up to a subsequence, it converges weakly to some $u\in H^1_0(\O)$. Observe that
\begin{equation}\label{PS}
\D u_n  = - \dfrac{1}{2} \, J'(u_n) +   m^2 \, u_n - \bigl( \o + q\, (\eta_{u_n} + \xi_{u_n} + \o \, \theta_{u_n} + \chi)\bigr)^2 \,  u_n \, .
\end{equation}
The first two summands in the right-hand side of~\eqref{PS} are clearly bounded in $H^{-1}(\O)$; we will show that the same is true for the third summand.\\
Since $\{J(u_n)\}$ is bounded, Proposition~\ref{boundary} implies that $\{\|q\, u_n\|_3\}$ is bounded away from $0$. Lemma~\ref{eta-theta}(a) applies: up to a subsequence, $\{\eta_{u_n}\}$ and $\{\theta_{u_n}\}$ converge in $H^1(\O)$, and are therefore bounded in $L^6(\O)$.
By~\eqref{xi}, $\{\xi_{u_n}+\chi\}$ is bounded in $L^6(\O)$ as well. It follows that $\{(\eta_{u_n} + \xi_{u_n} + \o \, \theta_{u_n} + \chi)^2\}$ is bounded in $L^3(\O)$, which in turn implies that $\bigl\{q^2 \, (\eta_{u_n} + \xi_{u_n} + \o \, \theta_{u_n} + \chi)^2 \, u_n \bigr\}$ is bounded in $L^{6/5}(\O)$, hence in $H^{-1}(\O)$. \\
On account of~\eqref{PS}, the sequence $\{\Delta u_n\}$ is bounded in $H^{-1}(\O)$;
the compactness of the inverse Laplace operator implies that, up to a subsequence, $\{u_n\}$ converges to $u$ in $H^1_0(\O)$. By Proposition~\ref{boundary}(b), $u \in \Lambda_q$.
\end{proof}

\section{Proof of the main results}\label{proof-of-main}

\begin{Proof}[Proof of Theorems~\ref{main1} and~\ref{main2}]
On account of the correspondence between critical points of $J$ and nontrivial solutions to Problem~\eqref{KGM}-\eqref{BC-no},
it suffices to prove that $J$ has a sequence of critical points $\{u_n\} \subset \Lambda_q$ satisfying~(i) and~(ii).
Observe that $J(u)=J(|u|)$ for every $u \in \Lambda_q$. This easily follows from the fact that $\Phi(u)=\Phi(|u|)$ for every $u \in \Lambda_q$, by the very definition of $\Phi$.

Suppose that $A\ne 0$
and $\|\al\|_{1/2} \, \|q\|_6$ is so small that the constant $C_1$, as defined in~\eqref{C1}, is strictly positive.
As we have shown in Section~\ref{prop-of-J}, under the assumptions in Theorems~\ref{main1} and~\ref{main2}, the functional $J$ is bounded from below, has complete sublevels, and
satisfies the Palais-Smale condition in $\Lambda_q$. These properties readily imply that $J$ attains its minimum at some $u_0 \in \Lambda_q$; by the observation above,  we can assume $u_0 \ge 0$ in $\O$.

By Proposition~\ref{lambda-q}(c), the set $\Lambda_q$ has infinite genus. Thus, Ljuster\-nik-Schnirel\-mann Theory applies (see~\cite[Corollary 4.1]{szulkin} and \cite[Remark~3.6]{ACZ}) and $J$ has a sequence $\{u_n\}_{n \ge 1}$ of critical points in $\Lambda_q$. Standard arguments show that $J(u_n)\to\infty$ (see~\cite[Chapter 10]{AM}).

Let $\{v_{n}\}$ be a bounded subsequence of $\{u_n\}$. In view of Proposition~\ref{if-and-only-if}, every subsequence of $\{v_{n}\}$ has a subsequence $\{v_{k_n}\}$ such that $\|q\, v_{k_n}\|_3\to 0$; this proves that $\|q \, v_n\|_3 \to 0$.
\end{Proof}

\begin{Proof}[Proof of Theorem~\ref{main3}]

Assume that $|\o| \le |m|$ and
$\|\al\|_{1/2} \, \|q\|_6$ is so small that the constant $C_1$, as defined in~\eqref{C1}, is strictly positive. \\
Suppose that $(u,\phi)$ is a solution to~\eqref{KGM}-\eqref{BC-no} with $A=0$ and let $\p:=\phi-\chi$. Then, $(u,\p)$ is a solution to
\begin{equation}\label{PROB-0}
\begin{cases}
\Delta u = m^2 u - \bigl(\o + q \, (\p+\chi)\bigr)^2 \, u \quad & \hbox{in $\O$,} \\[1mm]
\Delta \p =  q \, \bigl(\o + q \, (\p+\chi)\bigr) \, u^2 & \hbox{in $\O$,} \\[1mm]
\hskip 3.6mm u = \dfrac{\partial \p}{\partial \n} = 0 & \hbox{on $\partial \O$.}
\end{cases}
\end{equation}
Multiplying by $u$ the first equation in~\eqref{PROB-0} gives
\begin{align*}
0 & = \|\nabla u\|_2^2  + \int_\O m^2 \, u^2 \, dx - \int_\O \bigl(\o + q \, (\p+\chi)\bigr)^2 \, u^2 \, dx  \notag \\
& = \|\nabla u\|_2^2  + \int_\O (m^2-\o^2) \, u^2 \, dx - \int_\O (q\, u)^2 \, \p^2 \, dx - \int_\O 2 \, \o \, q \, u^2 \, \chi  \, dx   + {} \notag \\
& - \int_\O (q\, u)^2 \, \chi^2 \, dx   - \int_\O 2 \, (q\, u)^2 \, \p \, \chi \, dx - \int_\O 2 \, \o \, q \, u^2 \, \p  \, dx \, .
\end{align*}
Multiplying by $\p$ the second equation in~\eqref{PROB-0} gives
\begin{equation}\label{phi-1}
\|\nabla \p\|_2^2 + \int_\O (q\, u)^2 \, \p^2 \, dx  = - \int_\O (q\, u)^2 \, \p \, \chi \, dx - \int_\O \o \, q \, u^2 \p \, dx  \, .
\end{equation}
Substituting~\eqref{phi-1} into the preceding equality gives
\begin{align*}
0
 = \|\nabla u\|_2^2 & + \int_\O (m^2-\o^2) \, u^2 \, dx + \int_\O (q\, u)^2 \, \p^2 \, dx  - \int_\O 2 \, \o \, q \, u^2 \, \chi  \, dx   + {} \notag \\
& - \int_\O (q\, u)^2 \, \chi^2 \, dx  + 2\, \|\nabla \p\|_2^2 \, .
\end{align*}
Neglecting the nonnegative terms, and recalling~\eqref{bound-on-P1}, \eqref{bound-on-P2}, and the definition of~$C_1$, we obtain
\begin{align*}
0  & \ge \|\nabla u\|_2^2   - \int_\O 2 \, \o \, q \, u^2 \, \chi  \, dx - \int_\O (q\, u)^2 \, \chi^2 \, dx  \notag \\
& \ge \Bigl[1 -
2 \, |\o| \, \kappa \, \sigma_{12/5}^2 \, \|\al\|_{1/2} \, \|q\|_6 -  \kappa^2 \, \sigma_3^2 \, \|\al\|_{1/2}^2 \, \|q\|_6^2\Bigr] \, \|\nabla u\|_2^2 \\ & \ge C_1 \, \|\nabla u\|_2^2 \, ,
\end{align*}
which implies $u=0$.
\end{Proof}

\end{document}